\def\vs{\vspace{0.1cm}}
\theoremstyle{plain}
\newtheorem{thm}{Theorem}[section]
\newtheorem{cor}[thm]{Corollary}
\newtheorem{pro}[thm]{Proposition}
\newtheorem{lem}[thm]{Lemma}
\newtheorem{proposition-principale}[thm]{Proposition principale}
\newtheorem{thm-principal}{Th\'eor\`eme principal}[section]
\theoremstyle{definition}
\newtheorem{defi}[thm]{Definition}
\newtheorem{eg}[thm]{Example}
\newtheorem{rem}[thm]{Remark}
\newenvironment{thm-A}
{{\vs \noindent \bf Theorem A.$\,$}\it}{\vs}
\newenvironment{thm-B}
{{\vs \noindent \bf Theorem B.$\,$}\it}{\vs}
\newenvironment{thm-C}
{{\vs \noindent \bf Theorem C.$\,$}\it}{\vs}
\newenvironment{thm-BB}
{{\vs \noindent \bf Theorem B'.$\,$}\it}{\vs}
\newenvironment{thm-AA}
{{\vs \noindent \bf Theorem A'.$\,$}\it}{\vs}
\def\C{\mathbf{C}}
\def\R{\mathbf{R}}
\def\Q{\mathbf{Q}}
\def\Z{\mathbf{Z}}
\def\bfk{{\mathbf{k}}}
 \def\jj{{\mathsf{j}}}
\def\bfe{{\mathbf{e}}}
\def\bfE{{\mathbf{E}}}
\def\Frob{{\mathrm{Fr}}}
\def\calg{\mathcal{G}}
\newcommand{\Hyp}{\mathbb{H}}
\newcommand{\ZZ}{{\mathcal{Z}}}
\newcommand{\NS}{{\mathrm{NS}}}
\newcommand{\Elp}{{\mathsf{pEl}}}
\newcommand{\El}{{\mathsf{El}}}
\newcommand{\eps}{\epsilon}
\newcommand{\id}{{\rm Id}}
\def\Cr{\mathsf{Cr}}
\def\Trans{\mathsf{T}}
\def\P{\mathbb{P}}
\def\bbA{\mathbb{A}}
\def\bbG{{\mathbb{G}}}
\def\Aut{\mathsf{Aut}}
\def\pAut{\mathsf{pAut}}
\def\fieldchar{{\mathrm{char}}}
\def\calT{\mathcal{T}}
\DeclareMathOperator{\arcosh}{arcosh}
\DeclareMathOperator{\arcsinh}{arcsinh}
\DeclareMathOperator{\Cone}{Cone}
 \def\Bir{{\mathsf{Bir}}}
 \def\PGL{{\sf{PGL}}}
\def\O{{\sf{O}}}
\def\GL{{\sf{GL}}}
\def\SL{{\sf{SL}}}
\def\Aff{{\sf{Aff}}}
\def\Ind{{\text{Ind}}}
\def\dist{{\sf{dist}}}
\def\fieldchar{{\rm{char}\,}}
\newcommand{\Stab}{\mathrm{Stab}}
\newcommand{\an}[1]{{\textcolor{red}{#1}}}
\numberwithin{equation}{section}       
\begin{document}

\setlength{\baselineskip}{0.53cm}        
%
%
\title[Generating normal subgroups of the Cremona group]{Elements generating a proper normal subgroup of the Cremona group}
\date{January 6, 2020}

\author{Serge Cantat, Vincent Guirardel, and Anne Lonjou}
\address{Serge Cantat, Vincent Guirardel, \\
Univ Rennes, CNRS, IRMAR - UMR 6625, F-35000 Rennes, France}
\email{serge.cantat@univ-rennes1.fr} \email{vincent.guirardel@univ-rennes1.fr}
 \address{Anne Lonjou, University of Basel}
 \email{anne.lonjou@unibas.ch}

\thanks{The third author acknowledges support from the french Academy of Sciences (Fondation del Duca), and the third author
from the Swiss National Science Foundation Grant ``Birational transformations of threefolds'' $200020\mathunderscore178807$. }
\keywords{Cremona group; normal subgroups; Halphen twists; small cancellation}

%
%

\maketitle
 

%
%
%
%

\begin{abstract}
Consider an algebraically closed field $\bfk$, and let $\Cr_2(\bfk)$ be the Cremona group of all birational transformations of the projective plane over~$\bfk$.
We characterize infinite order elements $g\in \Cr_2(\bfk)$ having a power $g^n$, $n\neq 0$, generating a proper normal subgroup of $\Cr_2(\bfk)$.
\end{abstract}



\section{Introduction}

Let $\bfk$ be a field. Let $\Cr_2(\bfk)$ be the group of birational 
transformations of the projective plane $\P^2_\bfk$; we shall call it the {\bf{Cremona group}}. 
If $f$ is an element of $\Cr_2(\bfk)$, one can write $f$ in homogeneous coordinates $[x:y:z]$ as
\begin{equation}
f[x:y:z]=[P:Q:R]
\end{equation}
where $P$, $Q$, and $R$ are three homogeneous polynomials of the same degree $d$ in $(x,y,z)$
with no common factor of positive degree: the integer $d$
is called the degree of $f$, and is denoted $\deg(f)$. 

The Cremona group acts by isometries on a hyperbolic space of infinite dimension $\Hyp_\infty$ (see Subsection \ref{par:ZZ}). 
There are three types of isometries of such a space, and therefore also three types of elements $f\in \Cr_2(\bfk)$ (see Section~\ref{par:IsomType}): 
\begin{enumerate}
\item {\bf{elliptic}} isometries correspond to birational transformations for which the sequence $(\deg(f^n))$ is bounded;
\item {\bf{parabolic}} isometries correspond to a polynomial growth of $(\deg(f^n))$: either $\deg(f^n)$ grows linearly and  $f$ is
called a {\bf{Jonqui\`eres twist}}, or $\deg(f^n)$ grows quadratically and  $f$ is called a {\bf{Halphen twist}};
\item {\bf{loxodromic}} isometries correspond to birational transformations for which $\deg(f^n)$ grows exponentially fast as $n$
goes to $+\infty$. 
\end{enumerate}
The name for Jonqui\`eres and Halphen twists come from the following properties. If $\bfk$ is algebraically closed, such a twist $f\colon \P^2_\bfk\dasharrow\P^2_\bfk$ 
preserves a unique pencil of curves, of genus $0$ or $1$ respectively. Moreover, every pencil of rational curves is equivalent, via a birational 
change of coordinates, to the pencil of lines
through the point $[0:0:1]$, and the group of birational transformations preserving this pencil is known as the Jonqui\`eres group. 
And every pencil of curves of genus $1$ is equivalent to a Halphen pencil (see Section~\ref{par:Halphen}). 

A {\bf{monomial}} transformation is a birational transformation that, in affine coordinates $(x,y)=[x:y:1]$, can be written
$f(x,y)=(x^ay^b, x^cy^d)$ for some integers $a$, $b$, $c$, and $d$. The group of all monomial transformations is 
isomorphic to $\GL_2(\Z)$. 

In a group $G$, we say that an element $h$ {\bf {generates a proper normal subgroup}} if the smallest normal subgroup  
containing $h$ is not equal to $G$.

 \medskip

\begin{thm-A}
Let $\bfk$ be an algebraically closed field of characteristic $0$.
Let $f$ be an element of $\Cr_2(\bfk)$ of infinite order. The following properties are equivalent. 
\begin{itemize}
\item[(a)] For some  $n\neq 0$, $f^n$ generates a proper normal subgroup of $\Cr_2(\bfk)$. 
\item[(b)] The birational transformation $f$ is  a Halphen twist or a loxodromic element that is not conjugate to a monomial transformation.
\end{itemize}
\end{thm-A}

A version of this theorem in positive characteristic is given in Section~\ref{par:charpos}, Theorem~A'.
The proof of this theorem is a combination of several recent results together with one new input. Indeed, the heart of
our article is Theorem~B:

\medskip

\begin{thm-B}\label{thm_Halphen}  
There exists a positive integer $n_0$ with the following property. Let $\bfk$ be a field. If $g\in \Cr_2(\bfk)$ is a Halphen twist, then
 $g^{n_0}$ generates a proper normal subgroup of $\Cr_2(\bfk)$. 
Moreover, this normal subgroup is a free product of free abelian groups of rank $\leq 8$.
\end{thm-B}

\medskip

The proof of Theorem~B combines algebraic geometry with 
geometric group theory, as in \cite{DGO} and \cite{Cantat-Lamy}. 
In Theorem~B, $n_0$ does not depend on the Halphen twist $g$, but in Theorem~A, the integer $n$ must depend on $f$: see Theorem~\ref{thm:non-uniform}. We refer to  Theorem~B', stated in Section~\ref{par:conclusion}, for a slightly stronger result.

\medskip

\subsection*{Acknowledgement} Thanks to Julie D\'eserti for interesting discussions regarding Halphen pencils (parts of \S~\ref{par:Halphen}
are based on a joint work of the first author with Julie) and St\'ephane Lamy for numerous discussions on normal subgroups of $\Cr_2(\bfk)$.
We are grateful to the referees for numerous interesting remarks.

\section{Rational surfaces and hyperbolic spaces}

 \subsection{The lattice $\Z^{1,n}$} \label{par:Z1n}
 
Let $n$ be a (maybe infinite) cardinal number; in what follows $n$ will be either a positive integer or the cardinality of the 
field $\bfk$. 

Denote by $\R^n$ a real Hilbert space of dimension $n$, with a fixed
orthonormal Hilbert basis $(\bfe_j)_{j\in J}$, where $J$ has cardinal $n$ (which may be uncountable).
Then, consider a one dimensional space $\R\bfe_0$, and define $\R^{1,n}$ to be the direct sum $\R\bfe_0\oplus \R^n$, together 
with the (indefinite) scalar product defined by
\begin{equation}
u\cdot v = a_0b_0-\sum_{j\in J} a_jb_j
\end{equation}
for every pair of vectors $u=a_0\bfe_0+\sum_j a_j\bfe_j$ and $v=b_0\bfe_0+\sum_j b_j\bfe_j$ with coefficients satisfying $\sum_j a_j^2<+\infty$ and $\sum_j b_j^2<+\infty$. We shall also set $u^2=u\cdot u$.

The subset $\Hyp_n\subset \R^{1,n}$ is defined as the set of vectors $u$ in $\R^{1,n}$ with $u^2=1$ and 
$u \cdot  \bfe_0 >0$. With the distance $\dist$  defined by 
the formula
\begin{equation}
\cosh (\dist(u,v) )= u\cdot v,
\end{equation}
$\Hyp_n$ is isometric to the classical hyperbolic space of dimension $n$.
Its  boundary $\partial \Hyp_n$ can be identified with the isotropic rays $\R\xi$ (where $\xi$ is any non-zero vector
in $\R^{1,n}$ with $\xi^2  =0$). 

The lattice $\Z^{1,n}$ is, by definition, the subset of vectors $u= \sum_i a_i\bfe_i\in \R^{1,n}$ with integer coefficients $(a_i)$.
The family  $(\bfe_i)_{i\in I}$ is a basis of $\Z^{1,n}$ 
and the quadratic form associated to the scalar product satisfies
\begin{equation}\label{eq:intersection-basis}
  \bfe_0 \cdot \bfe_0  = 1, \;    \bfe_j \cdot \bfe_j=-1 \; {\text{for}} \; 1\leq j \leq n, \; {\text{and}} \;   \bfe_i \cdot \bfe_j =0 \;{\text{if}} \; i\neq j.
\end{equation}
When $n$ is finite, this is the standard odd, unimodular quadratic form of signature $(1,n)$. 
To simplify the exposition, we shall write abusively $\R^{1,\infty}$ instead of $\R^{1,n}$ for any infinite $n$.

 \subsection{Blow-ups of the plane} 
 
Now, let $X$ be a rational surface that is obtained from $\P^2_\bfk$ by $n$ successive blow-ups; thus, 
$X$ comes with a birational morphism $\pi\colon X\to \P^2$.  
Denote by $\NS(X)$ the  N\'eron-Severi group of $X$, by $\bfe_0$ the class of a line and by $\bfe_i$, $1\leq i\leq n$, 
the classes of the exceptional divisors (more precisely, $\bfe_0$ is the pull-back of the class of a line by $\pi$, 
and each $\bfe_i$ is the pull-back in $\NS(X)$ of the class of an exceptional divisor). The $\bfe_i$ form 
a basis of $\NS(X)$ and the intersection 
products between the $\bfe_i$ are exactly as in Equation~\eqref{eq:intersection-basis}. Thus, $\NS(X)$ is isometric 
to the lattice $\Z^{1,n}$, by a unique isometry identifying the two bases $(\bfe_i)$. Viewed in $\NS(X;\R)=\NS(X)\otimes \R$, 
the hyperbolic space $\Hyp_n$ will be denoted 
by $\Hyp_X$.

 \subsection{Infinitely many blow-ups}\label{par:ZZ}
 
 Blowing up all possible points of $\P^2_\bfk$, including infinitely near ones, 
 one gets a nested family of rational surfaces $\pi\colon X\to \P^2_\bfk$.
The inductive limit of their N\'eron-Severi groups
$\NS(X)$ is a well defined, infinite dimensional $\Z$-module $\ZZ$;  by definition, $\ZZ$ is the {\bf{Picard-Manin space}}. 
It comes with an intersection form and  a natural basis $(\bfe_i)$, where $\bfe_0$ is the class of a line in the plane, and 
 each $\bfe_j$, $j\neq 0$, is the class of the exceptional divisor of a point (in some rational surface $X\to \P^2_\bfk$). Their
 relative intersections satisfy the Equation~\eqref{eq:intersection-basis}.
 We refer to \cite{Cantat:Survey, Dolgachev:book, Manin:book} for a detailed account on this construction
(and the definition of the bubble space indexing the elements $\bfe_j$ of that basis). 
With the notation of Section~\ref{par:Z1n}, $\ZZ$ is isometric to a lattice $\Z^{1,\infty}\subset \R^{1,\infty}$;
here, $\R^{1,\infty}$ is the direct sum of $\R \bfe_0$ and of the $\ell^2$-completion of $\oplus_{i\neq 0}\R\bfe_i$. 

Now assume $\bfk$ to be algebraically closed.
 Since all points have been  blown-up to construct $\ZZ$, 
all indeterminacy points of birational transformations are resolved, and the Cremona group acts by isometries on 
$\ZZ$  and on the hyperbolic space $\Hyp_\infty \subset \R^{1,\infty}$ (see~\cite{Cantat:Survey, Manin:book}):

\begin{thm}
Let $\bfk$ be an algebraically closed field. The  group $\Cr_2(\bfk)$ acts faithfully by linear isometries for the intersection form on $\ZZ \simeq \Z^{1,\infty}$. In particular, it 
acts faithfully by isometries on the infinite dimensional hyperbolic space $\Hyp_\infty\subset \R^{1,\infty}$.
\end{thm}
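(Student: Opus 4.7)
The plan is to construct the $\Cr_2(\bfk)$-action on $\ZZ$ explicitly via resolutions, check that it preserves the intersection form, and establish faithfulness. The assertion about $\Hyp_\infty$ then follows automatically, since a linear isometry of $\R^{1,\infty}$ for the intersection form that sends effective classes (in particular $\bfe_0$) into the region $\{u\cdot \bfe_0>0\}$ must preserve the upper sheet $\Hyp_\infty$.

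To construct the action, pick $f\in \Cr_2(\bfk)$ and resolve its indeterminacies: choose a smooth rational surface $Y$ together with birational morphisms $\pi,\sigma\colon Y\to \P^2_\bfk$ with $f=\sigma\circ\pi^{-1}$. Since $Y$ is smooth and dominates $\P^2_\bfk$, each of $\pi,\sigma$ is a composition of point blow-ups, and therefore identifies $\NS(Y)$ with a finite-rank sublattice of $\ZZ$: under $\pi$, the natural basis of $\NS(Y)$ corresponds to the vectors $\bfe_0,\bfe_{p_1},\ldots,\bfe_{p_n}\in\ZZ$, where the $p_i$ are the (possibly infinitely near) points blown up by $\pi$, and similarly $\sigma$ produces another basis $\bfe_0,\bfe_{q_1},\ldots,\bfe_{q_n}$. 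I define $f_*\colon\ZZ\to\ZZ$ by sending the $\pi$-copy of $\NS(Y)$ identically onto the $\sigma$-copy, and by permuting all remaining exceptional basis vectors $\bfe_r$ according to the bijection of the bubble space induced by $f$ away from its base points. Refining $Y$ does not change $f_*$ because any two resolutions are dominated by a third, so $f_*$ is well-defined; a direct verification on basis vectors yields $(f\circ g)_*=f_*\circ g_*$.

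The map $f_*$ is an isometry for the intersection form, because each pull-back $\pi^*$ and $\sigma^*$ is an isometric embedding of $\NS(Y)$ into $\ZZ$ (a standard fact for blow-ups), while the permutation of the remaining basis vectors preserves the diagonal form of equation~\eqref{eq:intersection-basis}. For faithfulness, assume $f_*=\id_{\ZZ}$. Computing the image of the class of a line gives $f_*\bfe_0=d\,\bfe_0-\sum_p m_p\bfe_p$, where $d=\deg f$ and the $m_p$ are the multiplicities of $f^{-1}$ at its base points; the equality $f_*\bfe_0=\bfe_0$ forces $d=1$ and all $m_p=0$, so $f$ extends to a regular automorphism of $\P^2_\bfk$, i.e.\ $f\in\PGL_3(\bfk)$. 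For such an $f$ one has $f_*\bfe_p=\bfe_{f(p)}$ for every $p\in\P^2_\bfk$, so $f_*=\id$ forces $f$ to fix every point, hence $f=\id$.

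The genuinely delicate step is not any of these three verifications individually, but rather the functoriality bookkeeping: one must show that $f_*$ is independent of the chosen resolution $Y$ and respects composition, which amounts to a careful comparison of the embeddings of $\NS(Y)$ and $\NS(Y')$ into $\ZZ$ when $Y'$ dominates $Y$. Once this is secured, the isometry property is immediate from blow-up intersection theory and faithfulness is elementary. The full argument is carried out in \cite{Cantat:Survey, Manin:book}, and the sketch above just unpacks their construction.
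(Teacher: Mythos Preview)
Your sketch is correct and is precisely the standard construction from \cite{Cantat:Survey, Manin:book}; the paper does not give its own proof of this theorem but simply states it with a reference to those sources, so there is nothing to compare beyond noting that you have unpacked the cited argument accurately.
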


\begin{rem}
If $\bfk$ is not algebraically closed, we can choose an algebraic closure ${\overline{\bfk}}$ of ${\bfk}$, 
and embed $\Cr_2(\bfk)$ into $\Cr_2({\overline{\bfk}})$ to get a faithful action on an infinite dimensional 
hyperbolic space. 
\end{rem}
 
\section{Types of birational transformations}\label{par:IsomType}
 
From~Section~\ref{par:ZZ} we know that the Cremona group $\Cr_2(\bfk)$ acts faithfully by isometries 
on $\ZZ$ (with respect to the intersection product) and on $\Hyp_\infty$
(with respect to $\dist$). Since there are three types of isometries for such a space, 
we obtain three types of birational transformations of the plane: 
 elliptic, parabolic, and loxodromic elements of $\Cr_2(\bfk)$. 
We shall divide the proof of Theorem~A (and Theorem~A' below) in three cases, according to the type of $f\in \Cr_2(\bfk)$.

\subsection{Elliptic elements}

A birational transformation $f\in \Cr_2(\bfk)$ is {\bf{elliptic}} if and only if its orbits on $\Hyp_\infty$ 
are bounded, if and only if the sequence $(\deg(f^n))$ is bounded. In \cite{Blanc-Deserti}, J\'er\'emy Blanc
and Julie D\'eserti prove that any elliptic element $f$ of infinite order is conjugate to an element $f'$ of $\PGL_3(\bfk)$
 (their proof, written for $\bfk=\C$, works over
any algebraically closed field). We can now follow an argument of Marat Gizatullin (see \cite{Gizatullin_decompo_inertia}, Lemma~2) which is also given in the book \cite{Cerveau-Deserti} of Dominique Cerveau and Julie D\'eserti.
The smallest normal subgroup containing $f$ contains also $f'$, and because $\PGL_3(\bfk)$ is simple, it contains $\PGL_3(\bfk)$. In particular, it contains the involution $\eta ([x:y:z])=[-x:-y:z]$. But $\eta$ is conjugate to $\sigma ([x:y:z])=[yz:zx:xy]$ in $\Cr_2(\bfk)$ and by Noether-Castelnuovo theorem $\sigma$ 
and $\PGL_3(\bfk)$ generate $\Cr_2(\bfk)$; thus the normal 
subgroup generated by $f$ is equal to $\Cr_2(\bfk)$. This proves the 
 following lemma, and therefore also Theorem~A over arbitrary algebraically closed field for elliptic elements.
 
 \begin{lem}\label{lem:elliptic}
 Let $\bfk$ be an algebraically closed field. Let $f$ be a non-trivial element of $\PGL_3(\bfk)$ or
 an elliptic element of $\Cr_2(\bfk)$ of infinite order.
 The smallest normal subgroup of $\Cr_2(\bfk)$ containing $f$ is equal to $\Cr_2(\bfk)$.
 \end{lem}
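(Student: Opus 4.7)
The plan is to reduce everything to the statement that the normal closure of any non-trivial element of $\PGL_3(\bfk)$ equals all of $\Cr_2(\bfk)$, and then to deduce the elliptic case from this. For the reduction, I would invoke the Blanc--D\'eserti result: any elliptic element $f\in\Cr_2(\bfk)$ of infinite order is conjugate in $\Cr_2(\bfk)$ to some $f'\in \PGL_3(\bfk)$. Since conjugation in $\Cr_2(\bfk)$ does not change the normal closure, I may replace $f$ by $f'$ and therefore assume $f\in\PGL_3(\bfk)\setminus\{\id\}$.

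Let $N$ denote the normal closure of $f$ in $\Cr_2(\bfk)$. The subgroup $N\cap \PGL_3(\bfk)$ is normal in $\PGL_3(\bfk)$ and contains $f\neq \id$. Because $\bfk$ is algebraically closed, $\PGL_3(\bfk)=\PSL_3(\bfk)$ is a simple group, so $N\cap \PGL_3(\bfk) = \PGL_3(\bfk)$, i.e.\ $N\supseteq \PGL_3(\bfk)$.

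Next I would produce a standard quadratic Cremona involution in $N$. The involution $\eta([x:y:z])=[-x:-y:z]$ lies in $\PGL_3(\bfk)\subseteq N$. I would then exhibit an explicit birational map $\varphi\in\Cr_2(\bfk)$ (a suitable product of the standard involution with linear changes of coordinates, as in Gizatullin's argument) conjugating $\eta$ to the Cremona involution $\sigma([x:y:z])=[yz:zx:xy]$. Since $N$ is normal, $\sigma=\varphi\eta\varphi^{-1}\in N$. Noether--Castelnuovo then says $\Cr_2(\bfk)$ is generated by $\PGL_3(\bfk)$ together with $\sigma$, both of which lie in $N$, so $N=\Cr_2(\bfk)$.

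The only delicate step is verifying that $\eta$ and $\sigma$ are indeed conjugate in $\Cr_2(\bfk)$; this is essentially a concrete computation with a well-chosen birational change of coordinates, and is where I would expect to have to be careful (it is the content of Gizatullin's lemma cited in the text). Everything else is soft: Blanc--D\'eserti handles the reduction to the linear case, the simplicity of $\PGL_3$ delivers the whole linear group to $N$, and Noether--Castelnuovo closes the argument.
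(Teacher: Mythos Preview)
Your proposal is correct and follows essentially the same argument as the paper: reduce to $\PGL_3(\bfk)$ via Blanc--D\'eserti, use simplicity of $\PGL_3(\bfk)$ to get the whole linear group inside $N$, then pass from $\eta$ to the standard quadratic involution $\sigma$ by conjugation (Gizatullin's observation) and conclude with Noether--Castelnuovo. The only difference is cosmetic---the paper phrases the simplicity step slightly more tersely---and you have correctly flagged the conjugacy $\eta\sim\sigma$ as the one point requiring an explicit construction.
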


\begin{rem}\label{rem:Zimmermann+Lamy}
Lemma \ref{lem:elliptic} requires $\bfk$ to be algebraically closed. When $\bfk$ is the field $\R$ of real numbers, the smallest normal subgroup generated by {\sl{any}} element of $\Cr_2(\R)$ is a proper normal subgroup of $\Cr_2(\R)$ (see \cite[Corollary 1.4]{Zimmermann:Duke}). 
The same result holds for perfect fields with at least one Galois extension of degree $8$, for instance $\Q$; this follows from \cite[Theorem C(2)]{Lamy-Zimmermann:JEMS}.
\end{rem}

\subsection{Parabolic elements and Jonqui\`eres twists}

By definition, a {\bf{parabolic}} element of $\Cr_2(\bfk)$ acts on $\Hyp_\infty$ without fixed point, but 
with a unique fixed point on the boundary $\partial \Hyp_\infty$. According to~\cite{Diller-Favre, Gizatullin:1980}, there are in fact
two types of parabolic elements: 
\begin{itemize}
\item[(1)] the sequence $(\deg(f^n))$ grows linearly, and in that case, one says that $f$ is a {\bf{Jonqui\`eres twist}}. 
\item[(2)] the sequence $(\deg(f^n))$ grows quadratically and in that case, $f$ is called a {\bf{Halphen twist}}. 
\end{itemize}
This result holds over any field.

Let us assume that $f$ is a Jonqui\`eres twist first (we shall study Halphen twists in Section~\ref{par:Halphen}), and 
that $\bfk$ is algebraically closed. 
Then, it is proved in \cite{Diller-Favre} that $f$ preserves a 
unique pencil of rational curves in $\P^2_\bfk$ (this result of Jeffrey Diller and Charles Favre is stated for the field of complex numbers, but the proofs apply to any algebraically closed field). In particular, $f$ is a {\bf{Jonqui\`eres transformation}}: it is a Cremona transformation that preserves a pencil of rational curves in $\P^2_\bfk$. Up to conjugacy by an element of $\Cr_2(\bfk)$, there is a unique pencil of rational curves in 
the plane, namely the pencil of lines through a point. Thus, $f$, as well as any Jonqui\`eres transformation, is conjugate
to a birational transformation of the affine plane $\bbA^2_\bfk$ that permutes the vertical lines;
\begin{equation}\label{eq:Jonq}
f(x,y)=(A(x), B_x(y))
\end{equation}
where $A$ is in $\PGL_2(\bfk)$ 
and $B_x$ is in $\PGL_2(\bfk(x))$. Marat Gizatullin proved in \cite[Lemma 2]{Gizatullin_decompo_inertia} that the smallest normal subgroup of $\Cr_2(\bfk)$ containing such an element $f\neq \id$ coincides always with $\Cr_2(\bfk)$ (see also \cite[Prop. 5.21]{Cerveau-Deserti}). (\footnote{The proof works as follows. Write $f$ as in Equation~\eqref{eq:Jonq}, and choose $C_x\in \PGL_2(\bfk(x))$ such that the transformation 
$h(x,y)= (x,C_x(y))$ does not commute to $f$. Then, the commutator $[f,h]$ is an element of the simple group $\PGL_2(\bfk(x))$. This proves that
the smallest normal subgroup containing $f$ contains $\PGL_2(\bfk)$, and then one can apply Lemma~\ref{lem:elliptic}.}) Thus, we obtain the following lemma.

 \begin{lem}\label{lem:JonquieresAlgClosed}
 Let $\bfk$ be an algebraically closed field. If $f\in \Cr_2(\bfk)$ is a Jonqui\`eres transformation (for instance a Jonqui\`eres twist), 
 the smallest normal subgroup of $\Cr_2(\bfk)$ containing $f$ coincides with $\Cr_2(\bfk)$.
 \end{lem}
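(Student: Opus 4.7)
The plan is to follow the strategy outlined in the footnote attributed to Gizatullin and reduce to the elliptic case already treated in Lemma~\ref{lem:elliptic}. After a Cremona change of coordinates, I can assume $f$ preserves the pencil of vertical lines through $[0{:}0{:}1]$ and hence has the form $f(x,y) = (A(x), B_x(y))$ with $A \in \PGL_2(\bfk)$ and $B_x \in \PGL_2(\bfk(x))$, as in Equation~\eqref{eq:Jonq}. Let $N \trianglelefteq \Cr_2(\bfk)$ denote the smallest normal subgroup containing $f$. The goal is to show that $N$ contains a non-trivial element of $\PGL_3(\bfk)$; once that is established, Lemma~\ref{lem:elliptic} immediately gives $N = \Cr_2(\bfk)$.

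To produce such an element, I consider the subgroup $J_v \subset \Cr_2(\bfk)$ of birational transformations fixing every vertical line, which is naturally isomorphic to $\PGL_2(\bfk(x))$ (acting on the $y$-coordinate). For any $h \in J_v$ of the form $h(x,y) = (x, C_x(y))$, a direct computation yields
\begin{equation*}
[f,h](x,y) = \bigl(x,\; (B_{A^{-1}(x)}\, C_{A^{-1}(x)}\, B_{A^{-1}(x)}^{-1}\, C_x^{-1})(y)\bigr),
\end{equation*}
so $[f,h] \in J_v$. The next step is to choose $C_x$ so that this commutator is non-trivial. Taking $C_x = C$ constant with $C \in \PGL_2(\bfk)$, the commutator is trivial if and only if $C$ commutes with $B_{A^{-1}(x)}$ as an element of $\PGL_2(\bfk(x))$. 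Since $f$ has infinite order and the centre of $\PGL_2(\bfk(x))$ is trivial, there exists $C \in \PGL_2(\bfk)$ that does not commute with $B_x$, giving a non-trivial element $g := [f,h] \in N \cap J_v$.

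Now I exploit the simplicity of $\PSL_2(\bfk(x))$. Since $g$ is a commutator in $\PGL_2(\bfk(x))$, it in fact lies in $\PSL_2(\bfk(x))$; moreover, conjugating $g$ by arbitrary elements of $J_v \subset \Cr_2(\bfk)$ stays inside $N$, so $N \cap \PSL_2(\bfk(x))$ is a normal subgroup of $\PSL_2(\bfk(x))$ containing a non-trivial element. Because $\bfk(x)$ is infinite, $\PSL_2(\bfk(x))$ is simple, and hence $\PSL_2(\bfk(x)) \subset N$. Restricting to the constant subgroup $\PSL_2(\bfk) \subset \PSL_2(\bfk(x))$, which sits inside $\PGL_3(\bfk) \subset \Cr_2(\bfk)$ as the stabiliser of the pencil acting on $y$, I conclude that $N$ contains non-trivial elements of $\PGL_3(\bfk)$. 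Applying Lemma~\ref{lem:elliptic} to any such element yields $N = \Cr_2(\bfk)$.

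The main technical obstacle I anticipate is the choice of $h$: one must ensure that $[f,h]$ is non-trivial, which relies on the (fairly easy) fact that $B_x$, viewed in $\PGL_2(\bfk(x))$, is genuinely non-central, together with the availability of non-commuting constant conjugators in $\PGL_2(\bfk)$. Once this is in place, the simplicity of $\PSL_2(\bfk(x))$ and the reduction to Lemma~\ref{lem:elliptic} are formal.
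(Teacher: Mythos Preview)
Your approach is essentially that of the paper's footnote, but there are two small gaps you should patch.

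\textbf{The case $B_x=\id$.} The lemma is stated for an arbitrary non-trivial Jonqui\`eres transformation, with no infinite-order hypothesis; even granting infinite order, $B_x$ may still be the identity (take $f(x,y)=(x+1,y)$ in characteristic~$0$). In that situation every constant $C\in\PGL_2(\bfk)$ commutes with $B_x$, so your commutator $[f,h]$ is trivial and the argument stalls. The paper avoids this by allowing $h(x,y)=(x,C_x(y))$ with $C_x$ genuinely depending on~$x$: one checks that $fhf^{-1}=h$ for all such $h$ forces $B_x=\id$ \emph{and} $A=\id$, hence $f=\id$. So for $f\neq\id$ some $h\in J_v$ fails to commute with $f$, and then $[f,h]\in N\cap J_v\setminus\{\id\}$ as you want. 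Your ``infinite order'' clause is both unjustified and insufficient; simply drop the restriction to constant $C$.

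\textbf{The embedding into $\PGL_3(\bfk)$.} The constant subgroup $\PSL_2(\bfk)\subset J_v$ does \emph{not} lie inside $\PGL_3(\bfk)$: for instance $(x,y)\mapsto (x,1/y)$ becomes $[X:Y:Z]\mapsto [XY:Z^2:YZ]$, of degree~$2$. What is true, and enough, is that the affine (Borel) part $y\mapsto ay+b$ of $\PSL_2(\bfk)$ does embed linearly, via $[X:Y:Z]\mapsto [X:aY+bZ:Z]$. Since $N\supset\PSL_2(\bfk(x))$ contains in particular the translations $(x,y)\mapsto(x,y+b)$, you get non-trivial elements of $\PGL_3(\bfk)$ in $N$ and can invoke Lemma~\ref{lem:elliptic}. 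With these two adjustments your proof matches the paper's.
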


This proves Theorem~A over any algebraically closed field 
when $f$ is a Jonqui\`eres twist, because all iterates $f^n$, $n\neq 0$, are again Jonqui\`eres twists.  

\begin{rem}
Lemma~\ref{lem:JonquieresAlgClosed} requires $\bfk$ be algebraically closed (see Remark~\ref{rem:Zimmermann+Lamy}). 
\end{rem}

\begin{rem}
Most elements of finite order in $\Cr_2(\bfk)$ are conjugate to Jonqui\`eres transformations; as such, they do not generate
a proper normal subgroup.  When $\bfk=\C$, we know that there are $29$ families of finite order elements which are not conjugate to Jonqui\`eres transformations, and each of them is conjugate to an automorphism of a del Pezzo surface of degree $1$, $2$ or $3$ (see \cite[Theorem 3]{Blanc_subgroup_finite_order}). It would be interesting to decide whether some of them may generate a proper normal subgroup of $\Cr_2(\bfk)$.
\end{rem}

\subsection{Loxodromic transformations}\label{par:Loxo}
An element $f$ of $\Cr_2(\bfk)$ is {\bf{loxodromic}} if it fixes two boundary points of $\Hyp_\infty$ 
and acts as a non-trivial translation on the geodesic joining these two points. If $L(f)$ is the
hyperbolic length of this translation, then the sequence $(\deg(f^n))$ grows like $\exp(\vert n\vert L(f))$
as $n$ goes to $\pm\infty$. By definition, the number 
\begin{equation}
\lambda(f)=\exp(L(f))=\lim_{n\to +\infty} \deg(f^n)^{1/n}
\end{equation}
is the {\bf{dynamical degree}} of $f$ (see~\cite{Cantat:Annals}).

\subsubsection{$p$-Automorphisms (see~\cite{Cantat:Survey, SB})}
Before studying all loxodromic elements, we focus on a class of birational transformations
that is defined only when the characteristic of $\bfk$ is positive. Note that Theorem~\ref{thm:dynamical-degree} below has been 
obtained under slightly more restrictive hypotheses by Nicholas Shepherd-Barron in a recent version of~\cite{SB}.

Assume that $\bfk$ is a field of characteristic $\fieldchar(\bfk)=p>0$.
Let $\Frob(t)=t^p$ be the Frobenius endomorphism of $\bfk$. An element $\varphi(t)$ of $\bfk[t]$  is a {\bf{linearized polynomial}} or is a {\bf{$p$-polynomial}} if all its monomials have degree ${p^j}$ for some $j\geq 0$; in other words, one can write 
\begin{equation}
\varphi(t)=\sum_j a_j t^{p^j}=\sum_j a_j \Frob^{j}(t)
\end{equation}
where $\Frob^{j}$ is the $j$-th iterate of the Frobenius endomorphism, i.e. $t^{p^j}=\Frob^j(t)=\Frob\circ \Frob \circ \cdots \circ \Frob(t)$ ($j$ compositions). 
These $p$-polynomials are exactly the polynomial transformations of $\bfk$ which are additive $\varphi(x+y)=\varphi(x)+\varphi(y)$. 
The composition of two $p$-polynomials $\varphi$ and $\psi\colon \bfk \to \bfk$ is another $p$-polynomial; with 
the laws given by addition and composition, the set of linearized polynomials  $\bfk[\Frob]$ is, naturally, a non-commutative $\bfk$-algebra. 

Let $\bbG_a$ denote the additive group of dimension $1$.
Every $2\times 2$ matrix with coefficients in $\bfk[\Frob]$  
determines an algebraic endomorphism of the algebraic group
$\bbG_a(\bfk)\times \bbG_a(\bfk)$: if $a$, $b$, $c$ and $d$ are the coefficients of the matrix, the endomorphism is
given by 
\begin{equation}\label{eq:lin}
f(x,y)=(a(x)+b(y),c(x)+d(y)).
\end{equation}
It is invertible if and only if the matrix is invertible over the non-commutative ring $\bfk[\Frob]$.

\begin{lem}\label{lem:add-auto}
Let $\bfk$ be a field of characteristic $p>0$. The group of algebraic automorphisms of the algebraic group $\bbG_a(\bfk)\times \bbG_a(\bfk)$
coincides with the group $\GL_2(\bfk[\Frob])$. 
\end{lem}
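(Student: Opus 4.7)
The plan is to identify the endomorphism ring of the algebraic group $\bbG_a(\bfk)\times \bbG_a(\bfk)$ with the matrix ring $M_2(\bfk[\Frob])$, from which the automorphism group is obtained by taking units.

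\textbf{Step 1: Reduction to additive polynomials.} An algebraic endomorphism $f\colon \bbG_a^2 \to \bbG_a^2$ of algebraic groups consists of a pair of morphisms $f_1,f_2\colon \bbG_a^2\to \bbG_a$, each additive in the sense that $f_i(u+v)=f_i(u)+f_i(v)$ for the group law on $\bbG_a^2$. A morphism $\bbG_a^2\to \bbG_a$ is given by a polynomial $F(x,y)\in \bfk[x,y]$, and additivity forces $F(x,y)=F(x,0)+F(0,y)$ (by applying additivity to $(x,0)+(0,y)$ and using that $F(0,0)=0$). Thus $F(x,y)=\varphi(x)+\psi(y)$ where $\varphi,\psi\in\bfk[t]$ are additive polynomials in one variable. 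Consequently $f$ has the form~\eqref{eq:lin} with four additive polynomials $a,b,c,d\in\bfk[t]$.

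\textbf{Step 2: Additive polynomials are $p$-polynomials.} I would verify directly that a polynomial $\varphi(t)=\sum_{i\geq 0} c_i t^i \in \bfk[t]$ satisfying $\varphi(x+y)=\varphi(x)+\varphi(y)$ must have all its non-zero coefficients supported on exponents $p^j$. Expanding $\varphi(x+y)$ with the binomial theorem and comparing coefficients gives $c_i\binom{i}{k}=0$ in $\bfk$ for each $0<k<i$ with $c_i\neq 0$. By Lucas's theorem, $\binom{i}{k}\equiv 0 \pmod{p}$ for all $0<k<i$ if and only if $i$ is a power of $p$. Hence each of the four polynomials $a,b,c,d$ lies in $\bfk[\Frob]$.

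\textbf{Step 3: Composition corresponds to matrix multiplication.} A direct computation using that composition of additive polynomials in $\bfk[\Frob]$ is the product in the non-commutative algebra $\bfk[\Frob]$ (recall that $\Frob\circ (\lambda \cdot) = \lambda^p\cdot \Frob$, which is built into the twisted multiplication of $\bfk[\Frob]$) shows that if $f$ corresponds to a matrix $M=\bigl(\begin{smallmatrix}a & b\\ c & d\end{smallmatrix}\bigr)\in M_2(\bfk[\Frob])$ and $g$ to $N\in M_2(\bfk[\Frob])$, then $g\circ f$ corresponds to $NM$. By Steps 1 and 2, this gives a ring isomorphism from $\End(\bbG_a^2)$ to $M_2(\bfk[\Frob])$ sending the identity to the identity matrix.

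\textbf{Step 4: Conclusion.} Automorphisms of $\bbG_a^2$ are precisely the units of $\End(\bbG_a^2)$, hence correspond to invertible matrices over $\bfk[\Frob]$, i.e. to elements of $\GL_2(\bfk[\Frob])$. Conversely, every matrix in $\GL_2(\bfk[\Frob])$ obviously defines an algebraic group automorphism via~\eqref{eq:lin}, so the two groups coincide.

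The main technical point is Step 2, the classical characterization of additive polynomials in positive characteristic; everything else is a formal identification of an endomorphism ring, mildly subtle only insofar as one must keep track of the non-commutativity of $\bfk[\Frob]$ when writing composition as matrix multiplication.
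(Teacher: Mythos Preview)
Your proof is correct and follows essentially the same approach as the paper: both arguments amount to showing that an additive polynomial map $\bbG_a^2\to\bbG_a^2$ is given by a matrix of $p$-polynomials, hence lies in $M_2(\bfk[\Frob])$. The only difference is organizational---you first reduce to one-variable additive polynomials via the clean identity $F(x,y)=F(x,0)+F(0,y)$ and then invoke Lucas's theorem, whereas the paper handles the two-variable case directly by an induction on the leading monomial in lexicographic order; your version is arguably tidier, and your explicit Step~3 (identifying composition with matrix multiplication in the non-commutative ring) makes precise something the paper leaves implicit.
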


\begin{proof}
Consider an algebraic automorphism $F$ of the algebraic group $\bbG_a(\bfk)\times\bbG_a(\bfk)$.
Writing 
$F(x,y)=(U_1(x,y),U_2(x,y))$, we see that $U_i$  satisfies the relation $U_i(x_1+x_2,y_1+y_2)=U_i(x_1,y_1)+U_i(x_2,y_2)$. Looking at the highest degree
term $x^ky^\ell$ of $U_i$ (in the lexicographic order), one sees that it must be a power of $x$ or $y$, of degree $p^m$ for some $m$. 
Then, by induction on the degree, $U_1(x,y)=a(x)+b(y)$ and $U_2(x,y)=c(x)+d(y)$ for some pairs of linearized polynomials. Since $F$ is invertible, one
concludes that $F$ is an element of $\GL_2(\bfk[\Frob])$.
\end{proof}

 Consider  the  group $\Trans\subset \Cr_2(\bfk)$ consisting of all translations 
 \begin{equation}\label{eq:translation}
 t_{u,v}(x,y)=(x+u,y+v);
 \end{equation}
 this group is isomorphic to $\bbG_a(\bfk)\times \bbG_a(\bfk)$. 
We define the {\bf{group $\pAut(\bbA^2_\bfk)$ of $p$-automorphisms}} of the affine plane
as the normalizer of $\Trans$ in $\Cr_2(\bfk)$. 

\begin{lem}\label{lem:pAut} The group $\pAut(\bbA^2_\bfk)$ is a subgroup of $\Aut(\bbA^2_\bfk)$, and 
\[
\pAut(\bbA^2_\bfk)=   (\bbG_a(\bfk)\times\bbG_a(\bfk)) \rtimes\GL_2(\bfk[\Frob]).
\]
\end{lem}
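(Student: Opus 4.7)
The plan is to analyze the conjugation action of $\pAut(\bbA^2_\bfk)$ on its normal subgroup $\Trans$: the kernel will turn out to be exactly $\Trans$, the image will lie in $\GL_2(\bfk[\Frob])$ via Lemma~\ref{lem:add-auto}, and the inclusion provided by formula~\eqref{eq:lin} will furnish a splitting. First I would prove the inclusion $\pAut(\bbA^2_\bfk)\subset\Aut(\bbA^2_\bfk)$. Fix $f\in\pAut(\bbA^2_\bfk)$, pass to an algebraic closure $\overline{\bfk}$, and choose a point $p\in\bbA^2(\overline{\bfk})$ where $f$ is defined. For each $t\in\Trans$ there exists $t'=ftf^{-1}\in\Trans$, and the relation $f\circ t=t'\circ f$ shows that $f$ is defined at $t(p)$. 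Since $\Trans(\overline{\bfk})$ acts transitively on $\bbA^2(\overline{\bfk})$, $f$ has no indeterminacy on $\bbA^2$; applying the same argument to $f^{-1}$ gives $f\in\Aut(\bbA^2_\bfk)$.

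Next I would analyze the homomorphism $\rho\colon\pAut(\bbA^2_\bfk)\to\Aut(\Trans)$ defined by $\rho(f)(t)=ftf^{-1}$. Under the identification $\Trans\cong\bbA^2$ via $t_{u,v}\leftrightarrow(u,v)$, a short computation (evaluating $\rho(f)(t_{u,v})$ at the origin and using that $\rho(t_{f(0,0)})$ is trivial because $\Trans$ is abelian) shows that $\rho(f)$ is the map $(u,v)\mapsto f(u,v)-f(0,0)$. Thus $\rho(f)$ is simultaneously a regular morphism of $\bbA^2$ and a group automorphism of $\bbG_a^2$, hence an algebraic group automorphism, so Lemma~\ref{lem:add-auto} places it in $\GL_2(\bfk[\Frob])$. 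The kernel of $\rho$ consists of elements commuting with every translation: from $f\circ t_{u,v}=t_{u,v}\circ f$ for all $(u,v)$ one gets $f(u,v)=f(0,0)+(u,v)$, so $\ker\rho=\Trans$.

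To conclude, formula~\eqref{eq:lin} defines a natural injection $\GL_2(\bfk[\Frob])\hookrightarrow\pAut(\bbA^2_\bfk)$ whose image consists of additive automorphisms fixing the origin; this is a section of $\rho$ and intersects $\Trans$ trivially. Combined with the kernel computation, every $f\in\pAut(\bbA^2_\bfk)$ factors uniquely as $t_{f(0,0)}\circ g$ with $g\in\GL_2(\bfk[\Frob])$, yielding the desired decomposition $\pAut(\bbA^2_\bfk)=\Trans\rtimes\GL_2(\bfk[\Frob])$. The main obstacle is the first step: converting normalization of $\Trans$ into genuine regularity on $\bbA^2$ depends on $\Trans$ having a Zariski-dense orbit, which is why one must pass to the algebraic closure—a subtlety that matters for finite base fields like $\mathbb{F}_p$, where the orbit of a $\bfk$-rational point is finite.
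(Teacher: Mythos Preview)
Your proof follows essentially the same route as the paper's: both use the relation $f\circ t=t'\circ f$ to show that the indeterminacy locus of $f$ is translation-invariant and hence empty, then translate so that $f$ fixes the origin and invoke Lemma~\ref{lem:add-auto}. Your packaging of the semidirect-product structure via the conjugation homomorphism $\rho$ and the explicit section from~\eqref{eq:lin} is a bit more detailed than the paper's, but the content is identical.

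One remark on your finite-field caveat: passing to $\bar\bfk$ does not actually resolve the issue, because the hypothesis only gives $ftf^{-1}\in\Trans$ for $t\in\Trans(\bfk)$, not for $t\in\Trans(\bar\bfk)$, so your step~3 only shows $f$ is defined on the $\Trans(\bfk)$-orbit of $p$, which is not all of $\bbA^2(\bar\bfk)$. In fact the statement as written fails over $\mathbb{F}_2$: the birational involution $f(x,y)=\bigl(x,\,y+\tfrac{1}{x^2+x+1}\bigr)$ centralizes $\Trans(\mathbb{F}_2)$ (since $(x+1)^2+(x+1)+1=x^2+x+1$ in characteristic~$2$) yet is not a polynomial automorphism of $\bbA^2$. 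For infinite $\bfk$ no passage to the closure is needed, since the $\Trans(\bfk)$-orbit of a regular $\bfk$-point is already Zariski-dense and forces $\Ind(f)=\varnothing$; the paper's proof glosses over this same point.
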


\begin{proof} 
Let $f\in \Cr_2(\bfk)$ be an element that normalizes $\Trans$; here, $f$ is viewed as a birational transformation of $\bbA^2_\bfk$.
For every $(a,b)\in \bbG_a(\bfk)\times\bbG_a(\bfk)$, there exists a unique $(\tilde a,\tilde b)\in \bbG_a(\bfk)\times\bbG_a(\bfk)$
such that 
\begin{equation}\label{eq:f-normalizer}
f\circ t_{a,b}\circ f^{-1} =t_{\tilde a,\tilde b}.
\end{equation}
The map $(a,b)\mapsto (\tilde a,\tilde b)$ is the group-automorphism of  $ \bbG_a(\bfk)\times\bbG_a(\bfk)$ determined by conjugacy by $f$.
Equivalently, $f\circ t_{a,b}=t_{\tilde a,\tilde b}\circ f$. This equation shows that the set $\Ind(f)\subset \bbA^2_\bfk$ of indeterminacies of $f$ is invariant under 
translations, thus $\Ind(f)$ is empty, and $\Ind(f^{-1})$ too by the same reasoning. So, $f$ is a regular automorphism of $\bbG_a(\bfk)\times\bbG_a(\bfk)$.
Changing $f$ into $f\circ t_{u,v}$ with $(u,v)=f^{-1}(0,0)$, we assume that $f$ fixes the origin. Evaluating
Equation~\eqref{eq:f-normalizer} at the origin, we get $(\tilde{a}, \tilde{b}) = f(a,b)$. Thus, $f$ is now an algebraic automorphism 
of the group  $ \bbG_a(\bfk)\times\bbG_a(\bfk)$, and the conclusion follows from Lemma~\ref{lem:add-auto}.\end{proof}

When we say that an element $q$ of $\bfk[\Frob]$
has degree $d$ (with respect to $\Frob$), we mean that $q=\sum_{j=0}^d q_j \Frob^j$ with $q_d\neq 0$; 
writing $\Frob^j(t)=t^{p^j}$, $q$ becomes an element of $\bfk[t]$ of degree $p^d$. 

Let $\Aff(\bbA^2_\bfk)=(\bbG_a(\bfk)\times\bbG_a(\bfk)) \rtimes\GL_2(\bfk[\Frob])$ be the group of affine transformations of the affine plane $\bbA^2_\bfk$,
a subgroup of $\pAut(\bbA^2_\bfk)$.  
Let $\El(\bbA^2_\bfk)$ be the group of all elementary automorphisms,
 i.e. automorphisms \begin{equation}\label{eq:elementary}
h(x,y)=(ux+q(y),vy+w)
\end{equation}
with $q\in \bfk[y]$, $u$, $v$, and $w$ in $\bfk$, and $uv\neq 0$.
The subgroup $\Elp(\bbA^2_\bfk)=\El(\bbA^2_\bfk)\cap \pAut(\bbA^2_\bfk)$ 
of elementary $p$-automorphisms consists of automorphisms $h$ as in Equation~\eqref{eq:elementary}
with $q\in \bfk[\Frob]$.
\begin{thm}\label{thm:dynamical-degree}
Let $\bfk$ be a field of characteristic $p>0$. 
\begin{enumerate}
\item\label{thm_amalgamated_product} The group 
$\pAut(\bbA^2_\bfk)$
 is generated by $\Aff(\bbA^2_\bfk)$ and $\Elp(\bbA^2_\bfk)$. It is the amalgamated product of these two groups along their intersection. 

\item The dynamical degree of any   $p$-automorphism is a non-negative power of $p$. 
\end{enumerate}
\end{thm}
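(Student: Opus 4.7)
My plan is to deduce part~(1) from the Jung--van der Kulk theorem for $\Aut(\bbA^2_\bfk)$ via Bass--Serre theory, and then to read off part~(2) from the resulting normal form, using the structure of the non-commutative ring $\bfk[\Frob]$. As a preliminary step, I would verify that $\Aff(\bbA^2_\bfk)$ and $\Elp(\bbA^2_\bfk)$ are subgroups of $\pAut(\bbA^2_\bfk)$: the affine case is immediate since linear maps conjugate translations to translations, and for $h(x,y) = (ux + q(y), vy + w) \in \Elp$, conjugating $t_{a,b}$ yields a translation precisely because $q \in \bfk[\Frob]$ is additive, so $q(y+b) - q(y) = q(b)$ is constant in $y$. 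The same additivity criterion shows conversely that $\El(\bbA^2_\bfk) \cap \pAut(\bbA^2_\bfk) = \Elp(\bbA^2_\bfk)$.

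Next, to show $\pAut(\bbA^2_\bfk) = \langle \Aff, \Elp \rangle$, I would use Lemma~\ref{lem:pAut}: since $\Trans \subset \Aff$, it suffices to generate the matrix factor $\GL_2(\bfk[\Frob])$. The ring $\bfk[\Frob]$ is a non-commutative right-Euclidean ring with respect to its $\Frob$-degree, so Gauss elimination reduces any matrix in $\GL_2(\bfk[\Frob])$, via elementary row operations (coming from $\Elp$) and row-swaps (in $\Aff$), to a diagonal matrix whose entries are units of $\bfk[\Frob]$, hence lie in $\bfk^*$ and so in $\Aff$. For the amalgamated product structure, I would apply Bass--Serre theory to the action of $\pAut$ on the Bass--Serre tree $T$ of the Jung--van der Kulk decomposition of $\Aut(\bbA^2_\bfk)$: the stabilizers in $\pAut$ of two adjacent vertices of $T$ are $\Aff$ and $\Elp$ by the above, with edge stabilizer $\Aff \cap \Elp$, and since these two vertex stabilizers generate $\pAut$, Bass--Serre yields $\pAut = \Aff *_{\Aff \cap \Elp} \Elp$.

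For part~(2), this normal form divides elements of $\pAut(\bbA^2_\bfk)$, up to conjugation, into two classes: either (a) $f$ lies in $\Aff$ or $\Elp$, or (b) $f$ is cyclically reduced of the form $f = a_1 e_1 \cdots a_k e_k$ with $a_i \in \Aff \setminus (\Aff \cap \Elp)$ and $e_i \in \Elp \setminus (\Aff \cap \Elp)$. In case~(a), the iterates have bounded degree: for $h(x,y) = (ux + q(y), vy + w) \in \Elp$, one checks by induction that $\deg(h^n) = \deg_y(q) = p^d$ for all $n \geq 1$, so $\lambda(f) = 1 = p^0$. In case~(b), by the classical degree-multiplication formula for cyclically reduced words in the Jung amalgam, $\deg(f^n) = \left(\prod_i \deg_y(e_i)\right)^n$, hence $\lambda(f) = \prod_i \deg_y(e_i) = \prod_i p^{d_i}$, where $d_i$ is the $\Frob$-degree of the $q$-part of $e_i$; this is a non-negative power of $p$.

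The main obstacle I anticipate is the Bass--Serre step: one must carefully verify that the two adjacent vertex stabilizers in $\pAut$ satisfy no relations beyond those in $\Aff \cap \Elp$, equivalently that $\pAut$ acts on the minimal invariant subtree of $T$ with exactly two orbits of vertices and one of edges. An alternative, and potentially cleaner, strategy is to use the uniqueness of Jung's normal form directly in $\Aut(\bbA^2_\bfk)$: for $f \in \pAut$, one would argue that the factors in any reduced Jung decomposition are forced by the normalizer condition on translations to lie in $\pAut$ themselves---so elementary factors automatically lie in $\Elp$---and then conclude the amalgamated product structure from this uniqueness.
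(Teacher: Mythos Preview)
Your proposal is correct and follows essentially the same route as the paper: reduce generation to $\GL_2(\bfk[\Frob])$ via Lemma~\ref{lem:pAut} and clear it by a Euclidean-type algorithm, then obtain the amalgam by restricting the Jung--van der Kulk Bass--Serre tree to $\pAut$, and deduce part~(2) from the cyclically reduced normal form together with the Friedland--Milnor degree-multiplication formula. The ``main obstacle'' you flag is exactly what the paper handles by observing that the $\pAut$-orbit of the fundamental edge is a subtree (since $\pAut$ is generated by the two vertex stabilizers) and then invoking Serre's structure theorem for group actions on trees.
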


\begin{proof}
\newcommand{\pE}{\mathsf{pE}}\newcommand{\E}{\mathsf{E}}
\newcommand{\A}{\mathsf{A}}
To ligthen notations, we set $\A=\Aff(\bbA^2_\bfk)$, $\E=\El(\bbA^2_\bfk)$ and
$\pE=\Elp(\bbA^2_\bfk)$.
We first prove that $\A$ and $\pE$ generate $\pAut(\bbA^2_\bfk)$.
By Lemma \ref{lem:pAut}, it suffices to prove that $\GL_2(\bfk[\Frob])\subset \langle \A,\pE\rangle$.

Consider $f\in \GL_2(\bfk[\Frob])$ with coefficients $a$, $b$, $c$, and $d$ in $\bfk[\Frob]$, as in Equation~\eqref{eq:lin}.
If $c=0$, then $f(x,y)=(a(x)+b(y), d(y))$, and since $f$ is an automorphism of $\bbA^2_\bfk$ the degree of $a(t)$ and $d(t)$ with respect to $t$
must be equal to~$1$. Thus, $f$ is an elementary $p$-automorphism, i.e. $f\in \pE$. 
If $a=0$, consider the linear automorphism $J\in \A$ defined by $J(x,y)=(y,x)$.
Then as above, $J\circ f\in \pE$ and we are done. 

Assume now that $ac\neq 0$. 
Write $a=\sum_{i=0}^m a_i\Frob^j$ and $c=\sum_{j=0}^n c_j\Frob^j$ with $m=\deg(a)$ and $n=\deg(c)$ (the degrees are with respect to $\Frob$). 
We argue by induction on the complexity $\deg(a)+\deg(c)$.
Up to changing $f$ into $J\circ f$, we may assume that $m\leq n$. 
Set $u=c_n(\Frob^{n-m}(a_m))^{-1}$, and compose $f$ with the $p$-automorphism
\begin{equation}
g(x,y)= \left( x, y-u\Frob^{n-m}(x) \right) \in J\circ \pE\circ J^{-1}   
\end{equation}
to get a new element $g\circ f$ of $\GL_2(\bfk[\Frob])$ with lower complexity.
We conclude by induction that $g\circ f$ and therefore $f$ lie in $\langle \A,\pE\rangle$.
 
To conclude the proof of the first assertion, set ${\mathsf{S}}=\A\cap\pE=\A\cap \E$;  
this is the group of affine automorphisms of $\bbA^2_\bfk$ whose linear part is upper triangular. 
By the theorem of Jung and van der Kulk, one has a decomposition into an amalgamated product $\Aut(\bbA^2_\bfk)=\A*_{\mathsf{S}} \E$.
One can then conclude the proof of the first assertion using a reduced form argument in the amalgamated product, or arguing geometrically as follows.
From Theorem 7 of \cite{Serre:AASL2}, chapter 4,
$\Aut(\bbA^2_\bfk)$ acts on a tree $\calT$, with a fundamental domain given by an edge $e=v_1v_2$, where the stabilizer of $e$, $v_1$ and $v_2$ are respectively ${\mathsf{S}}$, $\A$ and $\E$.
Restricting this action to the subgroup $\pAut(\bbA^2_\bfk)$, 
the stabilizers of $e$, $v_1$ and $v_2$ in $\pAut(\bbA^2_\bfk)$ are respectively ${\mathsf{S}}$, $\A$ and $\pE$.
The orbit of $e$ under $\pAut(\bbA^2_\bfk)$ is a subtree of $\calT$, to which we can apply Theorem 6 of \cite{Serre:AASL2}, chapter 4: this gives
the amalgamated product structure we were looking for.  

For the second assertion, 
take a $p$-automorphism $f$, and write it as a composition 
\begin{equation}\label{eq_reduced_form}
f= g_\ell\circ g_{\ell-1} \circ   \cdots \circ g_1
\end{equation}
where each factor $g_i$ is an element of $\A$ or $\pE$.
We say that such a decomposition 
is reduced if none of the $g_i$ lies in ${\mathsf{S}}$,  
and no two consecutive $g_i$ lie in the same factor.
Every $p$-automorphism has such a reduced decomposition, unless it is an element of ${\mathsf{S}}$. 
We say that it is cyclically reduced if, moreover, $g_n$ and $g_1$ 
are not both in $\A$ or $\pE$. 

If $f$ lies in $\pE$ or $\A$,  then the degree of $f^n$ is bounded by $\deg(f)$, so the dynamical degree of $f$
is $\lambda(f)=1$.
If $f$ is written as a reduced composition of length $\geq 2$,
Theorem 2.1 in~\cite{Friedland-Milnor} asserts that $\deg(f)=\prod_i \deg(g_i)$
(\footnote{The proof, written over $\R$ or $\C$ in \cite{Friedland-Milnor}, applies verbatim over any field. 
The main remark is the following: start with an automorphism $h_0=(h_0^x(x,y), h_0^y(x,y))$ that satisfies $\deg(h_0^x)\leq \deg(h_0^y)=d_0$;
after composition with an element $h_1$ 
of $\pE\setminus \A$ of degree $d\geq 2$, we get an automorphism 
$h_1\circ h_0=(h'^x,h'^y)$ such that $\deg(h'^x)=dd_0> \deg(h'^y)$, and composing with an element $h_2$  of $\A\setminus \pE$ 
we obtain an automorphism $h_2\circ h_1\circ h_0=(h''^x,h''^y)$ such that $\deg(h''^x)\leq  \deg(h''^y)=dd_0$. Then, to prove Theorem~2.1 of~\cite{Friedland-Milnor}, do an induction on the length $\ell$ of $f$ in Equation~\eqref{eq_reduced_form}, starting with $g_1$ if it is in $\A$, or with $g_0=\id$ otherwise.
}).
Conjugating $f$ in $\pAut(\bbA^2_\bfk)$, we can assume that this decomposition is cyclically reduced 
so that the composition 
\begin{equation}
f^n= (g_\ell\circ g_{\ell-1} \circ   \cdots \circ g_1)\circ \cdots \circ  (g_\ell\circ g_{\ell-1} \circ   \cdots \circ g_1)
\end{equation}
is also reduced for all $n\geq 1$, so
 $\deg(f^n)=\prod_i \deg(g_i)^n=\deg(f)^n$. So $\lambda(f)=\deg(f)$, and this number is 
 a power of $p$ because $f$ is a $p$-automorphism. 
\end{proof}

\subsubsection{A result of Shepherd-Barron}
In~\cite{Cantat-Lamy}, a criterion is given to show that (a large iterate of) a given loxodromic element $f$ of the Cremona 
group $\Cr_2(\bfk)$ generates a proper normal subgroup of $\Cr_2(\bfk)$. In~\cite{SB}, Shepherd-Barron 
proves that this criterion is satisfied by every loxodromic element $f$, except in two cases: when $f$ is conjugate to a monomial transformation, 
or when $\fieldchar(\bfk)=p>0$ and $f$ is conjugate to a  $p$-automorphism of the plane. These results prove one
direction of the following theorem.

\begin{thm-C}
Let $\bfk$ be an algebraically closed field, and let $f$ be a loxodromic element of $\Cr_2(\bfk)$. The following properties
are equivalent:
\begin{itemize}
\item[(a)] there is an integer $\ell \geq 1$ such that $f^{\ell}$ generates a  proper normal subgroup of $\Cr_2(\bfk)$; 
\item[(b)] $f$ is not conjugate to a monomial transformation or, when $p:=\fieldchar(\bfk)>0$, to a $p$-automorphism of the plane. 
\end{itemize}
\end{thm-C}

To conclude the proof of this theorem, one needs to prove that monomial transformations and $p$-automorphisms
do not generate proper normal subgroups. Let us do it when $f\neq \id$ is a $p$-automorphism. Write $f(x,y)=F(x,y)\circ t_{a,b}$
where $t_{a,b}$ is a translation. Since $f$ is loxodromic, $F$ is not the identity. Then, for 
every translation $t_{u,v}$ (as in Equation~\ref{eq:translation}), one gets
\begin{equation}
f\circ t_{u,v}\circ f^{-1}\circ t_{u,v}^{-1}= t_{u',v'}
\end{equation}
where $(u'+u,v'+v)=F(u,v)$. One can choose $t_{u,v}$ in such a way that $t_{u',v'}\neq \id$, because $F$ is not the identity. 
This implies that the smallest normal subgroup containing $f$ contains a non-trivial element $t_{u',v'}$ 
of $\PGL_3(\bfk)$, and the conclusion follows from Lemma~\ref{lem:elliptic}. The proof is the same for monomial 
transformations, replacing the translations $t_{u,v}$ by the diagonal transformations $d_{u,v}\colon(x,y)\mapsto (ux, vy)$ with $uv\neq 0$
(i.e. the additive group $\bbG_a\times \bbG_a$ by the multiplicative group $\bbG_m\times \bbG_m$). 

\begin{rem}
When the field is not algebraically closed, the theorem of Shepherd-Barron may be stated as follows: let $f$ be a loxodromic element of
$\Cr_2(\bfk)$; if the normal subgroup generated by $f^n$ co\"incides with $\Cr_2(\bfk)$ for all $n\neq 0$, then $f$ normalizes a commutative
algebraic subgroup $G\subset \Cr_2(\bfk)$ such that $G(\bfk)$ is infinite and $G$ becomes isomorphic to $\bbG_a^2$ or $\bbG_m^2$ on any
algebraic closure of $\bfk$. In $\Cr_2(\R)$, the derived subgroup is a subgroup of infinite index that contains all monomial transformations 
(this follows from \cite[Theorem 1.1]{Zimmermann:Duke} because the group of monomial transformations is generated by elements of
$\Bir(\P^2_\bfk)$ of degree $\leq 2$).
If $\bfk$ is any perfect field with an algebraic Galois extension of degree $8$ (for instance any number field), St\'ephane Lamy and Susanna Zimmermann 
show that the smallest normal subgroup of $\Cr_2(\bfk)$ containing any given monomial transformation is a proper subgroup of $\Cr_2(\bfk)$ (\cite{Lamy-Zimmermann:JEMS}).
 \end{rem}

\subsubsection{An example}

Let $m$, and $q$ be positive integers. Set $n=m^2-1$. Consider the element $h$ of $\Cr_2(\bfk)$ defined in affine coordinates by
\begin{equation}
h(x,y)=(y,x+y^e)
\end{equation}
with $e=m$ modulo $n$.
Let $\alpha$ be a root of unity such that $\alpha^{n}=1$ (i.e. $\alpha^{m^2}=\alpha$). Set $\beta=\alpha^m$ and consider the diagonal 
linear automorphism $g(x,y)=(\alpha x, \beta y)$.
Then 
\begin{equation}
h\circ g \circ h^{-1}= g^m.
\end{equation}
Now, compose $h$ with a monomial transformation $f(x,y)=(x^ay^b,x^cy^d)$. Then, 
$(f\circ h)\circ g \circ (f\circ h)^{-1}=g^q$ if and only if the integers $a$, $b$, $c$, and $d$
satisfy 
\begin{equation}\label{eq:congruences}
am+b=q \quad {\mathrm{and}} \quad cm+d=qm \quad \mod (n).
\end{equation}
We can choose $c=q$ and $d=0$ modulo $n$, and then $b$ such that $bq=\pm 1$ modulo $n$ (this is possible
if $q$ is invertible modulo $n$). Since $m$ is invertible modulo $n$, we then set $a=(q-b)/m$ modulo $n$.
Thus, the system of congruences \eqref{eq:congruences} has  
solutions for which $ad-bc=\pm 1$  modulo $n$;
then, one can lift these solutions to elements of $\GL_2(\Z)$. One gets more solutions by composition with elements
in the kernel of the projection $\GL_2(\Z)\to \GL_2(\Z/n\Z)$.

\begin{pro}
Let $\bfk$ be a field. Let  $m$ and $q$ be positive integers,  set $n=m^2-1$, and assume that $q$ is invertible modulo $n$.
Then, there is a  loxodromic element $s\in \Cr_2(\bfk)$ and an elliptic element $g\in \PGL_3(\bfk)\subset \Cr_2(\bfk)$
such that
\begin{enumerate}
\item $g$ is of order $n$;
\item $s \circ g \circ s^{-1}=g^q$; 
\item $s$ is not conjugate to a monomial map or a $p$-automorphism (if $\fieldchar(\bfk)=p>0$).
\end{enumerate}
\end{pro}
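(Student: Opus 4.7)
The first two properties follow from a direct verification of the formulas sketched in the paragraph preceding the proposition. I would fix a primitive $n$-th root of unity $\alpha \in \bfk$ (passing to a finite extension if necessary, and noting $\gcd(n, \fieldchar(\bfk))=1$ may be assumed), set $\beta = \alpha^m$, and define $g(x,y) = (\alpha x, \beta y) \in \PGL_3(\bfk)$. Since $\gcd(m, m^2-1)=1$, the order of $g$ is exactly $n$. Writing $h^{-1}(u,v) = (v - u^e, u)$ and substituting, one checks
\begin{equation}
h \circ g \circ h^{-1}(u,v) = (\beta u,\ \alpha v + (\beta^e - \alpha) u^e),
\end{equation}
and the $u^e$-term vanishes because $\alpha^{me} = \alpha$, a consequence of $e \equiv m$ and $m^2 \equiv 1$ modulo $n$. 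This gives $h \circ g \circ h^{-1} = g^m$. Then for any monomial $f$ with exponent matrix $M = \bigl(\begin{smallmatrix} a & b \\ c & d \end{smallmatrix}\bigr)$, using $\alpha^{m^2} = \alpha$ one computes $f \circ g^m \circ f^{-1}(u,v) = (\alpha^{ma+b} u, \alpha^{mc+d} v)$, which equals $g^q$ iff the two congruences of \eqref{eq:congruences} hold. The explicit prescription in the text (choosing $c,d,b,a$ modulo $n$ using invertibility of $m$ and $q$) produces a solution with $ad-bc \equiv \pm 1 \pmod n$, and adjusting $a$ by a multiple of $n$ lifts to $\det M = \pm 1$ in $\GL_2(\Z)$.

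Next, I would show that $s = f \circ h$ is loxodromic. The map $h$ is a H\'enon-type polynomial automorphism: choosing the representative $e \geq 2$ of $m$ mod $n$, one has $h = J \circ e_0$ with $J(x,y) = (y,x)$ affine and $e_0(x,y) = (x+y^e, y)$ elementary of degree $e$, a reduced expression of length $2$ in the Jung--van der Kulk decomposition $\Aut(\bbA^2_\bfk) = \mathsf{A} *_{\mathsf{S}} \mathsf{E}$. Hence by the Friedland--Milnor formula (Theorem 2.1 of \cite{Friedland-Milnor}, see also the footnote to Theorem~\ref{thm:dynamical-degree}), $\lambda(h) = e \geq 2$, so $h$ is loxodromic. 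The solutions $f$ to the congruences form an infinite coset of the principal congruence subgroup $\Gamma(n) = \ker(\GL_2(\Z) \to \GL_2(\Z/n\Z))$, containing elements $k$ with arbitrarily large spectral radius. For a base solution $f_0$ and $k \in \Gamma(n)$ hyperbolic and of sufficiently large translation length on $\Hyp_\infty$, a ping-pong argument using the axes of $f_0 k$ and $h$ shows that $s = (f_0 k)\circ h$ has positive translation length, i.e.\ is loxodromic.

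The main obstacle is property (3). By Theorem~\ref{thm:dynamical-degree}, a $p$-automorphism has dynamical degree a non-negative power of $p$, and a loxodromic monomial map has dynamical degree equal to the spectral radius of a hyperbolic matrix in $\GL_2(\Z)$, hence a real quadratic algebraic integer. Since the dynamical degree is a conjugacy invariant, it suffices to exhibit a choice of $f$ in the admissible coset for which $\lambda(s)$ is neither a power of $p$ nor a quadratic algebraic integer — for instance, an algebraic integer of degree at least three. My plan is to use the ping-pong geometry of the previous paragraph: when the axes of $f_0 k$ and $h$ on $\Hyp_\infty$ are disjoint with sufficiently large distance, the isometry $s$ can be analysed via its action on the convex hull of the two axes, yielding $\lambda(s)$ as a dominant eigenvalue of a block matrix of size growing with the word length, whose characteristic polynomial generically has degree $\geq 3$ and is not cyclotomic up to scaling by $p$. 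Among the infinitely many $k \in \Gamma(n)$ of large enough complexity, only countably many choices can produce exceptional (quadratic or $p$-power) dynamical degrees; thus a suitable $k$, and hence $s$, exists. Rigorously controlling this generic algebraic degree of $\lambda(s)$ is the step I expect to be most delicate; a more concrete alternative would be to compute $\lambda(s)$ directly as the spectral radius of the induced map on $\NS(X)$ for an explicit blow-up $X \to \P^2_\bfk$ resolving the indeterminacies of $s$.
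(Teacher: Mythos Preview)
Your treatment of (1) and (2) is fine and matches the paper's verification.

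The approach to loxodromicity and to property (3), however, has a genuine gap, and misses a much simpler route that the paper takes. You correctly observe that the dynamical degree of a loxodromic monomial map is the spectral radius of a hyperbolic matrix in $\GL_2(\Z)$; but you stop at ``quadratic algebraic integer'' and then try to force $\lambda(s)$ to have algebraic degree $\geq 3$. This is both unnecessary and, as stated, not justified: your genericity argument (``only countably many choices can produce exceptional dynamical degrees'') is not a proof, and the ping-pong geometry you invoke does not give control on the minimal polynomial of $\lambda(s)$. Worse, the strategy is aimed at the wrong target: the spectral radius of a hyperbolic element of $\GL_2(\Z)$ is a quadratic \emph{unit}, hence irrational. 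So to rule out monomial maps it suffices to arrange that $\lambda(s)$ is a rational integer $>1$.

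This is exactly what the paper does, and it bypasses all the ping-pong machinery. One chooses the representative $e$ of $m$ modulo $n$ large, and adjusts $f$ within its $\Gamma(n)$-coset so that the entries satisfy $e>c>d>a>b>1$. A direct recursion on degrees (tracking the top-degree monomial under iteration) then gives $\deg\bigl((f\circ h)^k\bigr)=(c+de)^k$, whence $\lambda(s)=c+de\in\Z_{>1}$. This single computation shows at once that $s$ is loxodromic, that $\lambda(s)$ is not a quadratic irrational unit (so $s$ is not conjugate to a monomial map), and---after a further adjustment of $e$ if needed---that $\lambda(s)$ is not a power of $p$ (so $s$ is not conjugate to a $p$-automorphism, by Theorem~\ref{thm:dynamical-degree}). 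Your ``more concrete alternative'' in the last sentence is in fact the right idea; the explicit degree recursion is elementary and replaces the entire second and third paragraphs of your plan.
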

\begin{proof}[Sketch of proof]
Take $f$, $g$, and $h$ as above, and set $s=f\circ h$. All we need to show is that one can choose $f$ in such 
a way that $s$ is not conjugate to a monomial map or a $p$-automorphism. Composing $f$ with monomial 
maps given by matrices $M\in \GL_2(\Z)$ which are equal to ${\mathrm{Id}}$ modulo $n$, and choosing $e$ large, we may assume
that the entries of $f$ satisfy $e > c > d > a > b >1$. Then, a simple recursion shows that the degree of $(f\circ h)^k$
is equal to $(c+de)^k$; in particular, the dynamical degree of $f\circ h$ is equal to the integer $c+de$. Thus, $f\circ h$
is not conjugate to a monomial map because the dynamical degree of a loxodromic monomial map is not an integer: it is
a quadratic unit. And, changing $e$ if necessary, one sees that $f\circ h$ is not conjugate to a $p$-automorphism because the dynamical degree of such 
an automorphism of the plane is a power of $p$ (see Theorem \ref{thm:dynamical-degree}). 
\end{proof}

Pick such a pair of elements $(s,g)$ in the Cremona group. The smallest normal subgroup generated by 
$s^k$ contains 
\begin{equation}
[s^k,g]=s^k\circ g \circ s^{-k}\circ g^{-1}=g^{q^k-1}.
\end{equation}
Thus, if $q^k\neq 1$ modulo $n$ for all $1\leq k\leq \ell$, the smallest normal subgroup generated by $s^k$, for any $k\leq \ell$, contains 
a non-trivial elliptic element and  co\"{\i}ncides with $\Cr_2(\bfk)$ if $\bfk$ is algebraically closed (Lemma~\ref{lem:elliptic}). We obtain: 

\begin{thm}\label{thm:non-uniform}
Let $\bfk$ be an algebraically closed field. For 
every integer $\ell$, there is a loxodromic element $s$ in $\Cr_2(\bfk)$ such that  $s^n$ generates a proper
normal subgroup of $\Cr_2(\bfk)$ for some $n>\ell$, but not for $n\leq \ell$. \end{thm}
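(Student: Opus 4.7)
The preceding paragraph already exhibits the core proof template; the only tasks left are to ensure that $q$ can be chosen of sufficiently large multiplicative order modulo $n=m^2-1$, and to invoke Theorem~C for the converse direction.

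For the first task, pick any prime $p$ with $p>\ell+1$, choose $m$ with $m\equiv 1\pmod{p}$ (so that $p$ divides $m-1$, hence divides $n$), and by the Chinese Remainder Theorem select $q\in(\Z/n\Z)^*$ whose reduction modulo $p$ is a primitive root of the multiplicative group modulo $p$. Then the multiplicative order of $q$ in $(\Z/n\Z)^*$ is divisible by $p-1>\ell$, so $q^k\not\equiv 1\pmod{n}$ for every $1\leq k\leq\ell$. Apply the preceding Proposition with this data to obtain a loxodromic element $s\in\Cr_2(\bfk)$, not conjugate to a monomial transformation or (when $\fieldchar(\bfk)=p>0$) to a $p$-automorphism, and an element $g\in\PGL_3(\bfk)$ of order $n$ with $sgs^{-1}=g^q$. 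For each $1\leq k\leq\ell$, the commutator $[s^k,g]=g^{q^k-1}$ is a non-trivial element of $\PGL_3(\bfk)$ contained in the normal closure of $s^k$, and Lemma~\ref{lem:elliptic} then forces this normal closure to equal $\Cr_2(\bfk)$. Hence $s^k$ does not generate a proper normal subgroup for any $1\leq k\leq\ell$.

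For the converse direction, Theorem~C applies to $s$ (since $s$ is loxodromic and avoids both exceptional classes) and furnishes an integer $n_0\geq 1$ such that $s^{n_0}$ generates a proper normal subgroup $N\subsetneq\Cr_2(\bfk)$. Any positive multiple $n$ of $n_0$ satisfies $s^n\in N$, so the normal closure of $s^n$ is contained in $N$ and is therefore proper. Choosing $n$ to be a multiple of $n_0$ strictly greater than $\ell$ yields the required integer, completing the proof. No serious obstacle arises: all the substantive content is already packaged in the preceding Proposition and in Theorem~C, and the only remaining ingredient is the elementary number-theoretic step of securing $q$ of large enough multiplicative order modulo $n$.
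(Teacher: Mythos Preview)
Your proof is correct and follows the same strategy as the paper, which leaves both the number-theoretic choice of $q$ and the appeal to Theorem~C implicit; you have simply made these steps explicit. One cosmetic point: you use the letter $p$ for your auxiliary prime and then immediately write ``when $\fieldchar(\bfk)=p>0$'', which clashes; rename the auxiliary prime to avoid confusion.
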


This result shows that, unlike in Theorem B, one cannot take $n$ independent of $f$ in Theorem A and Theorem A' below.

\subsection{Theorem A'}\label{par:charpos}

We can now state the extension of Theorem~A' to algebraically closed fields of arbitrary characteristic. 

\medskip

\begin{thm-AA}
Let $\bfk$ be an algebraically closed field.
Let $f$ be an element of $\Cr_2(\bfk)$ of infinite order. The following properties are equivalent. 
\begin{itemize}
\item[(a)] There exists an integer $n>0$ such that $f^n$ generates a non-trivial proper normal subgroup of $\Cr_2(\bfk)$. 
\item[(b)] The birational transformation $f$ is  a Halphen twist or it is a loxodromic element that is not conjugate to a monomial transformation
or  to a $p$-automorphism when $p:=\fieldchar(\bfk)>0$.
\end{itemize}
\end{thm-AA}
\medskip

We already proved this result for elements $f\in \Cr_2(\bfk)$ which are not Halphen twists. The rest of this paper is 
devoted to this last case.  

\section{Halphen twists, Halphen pencils, and translation lengths}\label{par:Halphen}

By definition, a Halphen twist $f$ in $\Cr_2(\bfk)$ is a birational transformation $f\colon \P^2_\bfk\to \P^2_\bfk$
such that $\deg(f^n)$ grows quadratically.
In this section, we describe a result of Marat Gizatullin showing that
such a birational transformation preserves a unique pencil of curves of genus $1$. But before that, we start with 
a description of all such pencils and their geometry. 

\subsection{Horoballs}\label{par:horoballs_cr}
As in Section~\ref{par:Z1n}, consider the space $\R^{1,n}$, together with its lattice $\Z^{1,n}$ and 
hyperbolic subset $\Hyp_n$.
Assume that the vector $\xi=a_0\bfe_0-\sum_j a_j\bfe_j$ of $\R^{1,n}$ satisfies $a_0>0$, 
and $\xi \cdot \xi=a_0^2-\sum_j a_j^2=0$. Then, $\xi$ determines a boundary point
of the hyperbolic space $\Hyp_n$. Let $\epsilon$ be a positive real number. 
The {\bf{horoball}} $H_\xi(\epsilon)$ in $\Hyp_n$ is the subset
\[
H_\xi(\epsilon)=\{v\in \Hyp_n  \; ; \; 0 < v\cdot \xi  \leq \epsilon\}.
\]
It is a limit of balls with centers converging to the boundary point $\xi$. The horosphere is
the boundary 
\begin{equation}
\partial H_\xi(\epsilon)=\{v\in \Hyp_n  \; ; \;  v\cdot \xi = \epsilon\}.
\end{equation}
\begin{rem}
If $h$ is any isometry of $\Hyp_n$, then 
\begin{equation}
h(H_\xi(\epsilon))=H_{h(\xi)}(\epsilon).
\end{equation}
If $h$ fixes the boundary point $\R\xi\in \partial\Hyp_n$ then $h$ maps $H_\xi(\epsilon)$ to $H_\xi(e^{\pm L(h)} \epsilon)$, where $L(h)$ 
is the translation length of $h$ and the sign is positive (resp. negative) when $\xi$ is attracting (resp. repelling).
\end{rem}

 \subsection{The lattice $\Z^{1,9}$ and the group $W_9$} 
 
 Consider the lattice $\Z^{1,n}$ introduced in Subsection~\ref{par:Z1n}, but now for the specific dimension $n=9$. 
The \emph{anti-canonical vector} $\xi_9=3\bfe_0-\sum_{j=1}^9 \bfe_i$ is
isotropic, and the ray $\R \xi_9$ determines a boundary 
 point of the hyperbolic space $\Hyp_9$. The horoballs centered at this boundary point are the subsets 
$H_{\xi_9}(\epsilon)=\{u\in \Hyp_9\; ; \; 0 < u\cdot \xi_9 \leq \epsilon \}.$
The riemannian metric induced on $\partial H_{\xi_9}(\epsilon)$ by the hyperbolic metric is euclidean, and we can identify  
$\partial H_{\xi_9}(\epsilon)$ to $\R^8$ with a euclidean metric $\dist_{euc}$ (see \cite[Part II, Lemma 11.32]{BH}).
The euclidean and hyperbolic distances satisfy
  \begin{equation}
\frac 12\dist_{euc}(x,y)=\sinh( \frac12 \dist(x,y))\label{eq_eucl}
\end{equation}
for all pairs of points $x$ and $y$ on $\partial H_{\xi_9}(\epsilon)$
(see \cite{Smogorzhevsky}[\S13] for instance).

\begin{rem}The horospheres  $\partial H_{\xi_9}(\epsilon)$ foliate $\Hyp_9$ and $\partial H_{\xi_9}(3)$ is the leaf containing $\bfe_0$. 
The geodesics of $\Hyp_9$ with one end point equal to $\xi_9$ form a transverse foliation;
if we follow  this geodesic foliation, we get a family of (holonomy) maps $h_\epsilon\colon \partial H_{\xi_9}(3)\to \partial H_{\xi_9}(\epsilon)$.
In the half-space model with $\xi_9$ at infinity, the horospheres are horizontal hyperplanes and the $h_\epsilon$ are vertical translations. 
They are not isometric, but are similitudes with respect to the euclidean metrics on the horospheres: $h_\epsilon$ multiplies 
euclidean distances by $\epsilon/3$.\end{rem}

In the orthogonal group $\O(\Z^{1,9})$, 
there is an index $2$ subgroup $\O(\Z^{1,9})^+$ preserving $\Hyp_9$, and thus acting by isometries on $\Hyp_9$. The subgroup 
$\O(\Z^{1,9}; \xi_9)^+$ fixing the isotropic line $\R\xi_9$ fixes also the class $\xi_9$, because it is the unique primitive
integral vector on this line with $a_0>0$; this group preserves every horosphere $\partial H_{\xi_9}(\epsilon)$, acting on it as a group of affine isometries 
for the euclidean metric $\dist_{euc}$. The action of $\O(\Z^{1,9}; \xi_9)^+$ on $\partial H_{\xi_9}(3)$ is conjugate to the action on $\partial H_{\xi_9}(\epsilon)$
by the similitude $h_\epsilon$. 

The group of affine isometries of the euclidean space  $\partial H_{\xi_9}(\epsilon)$
is an extension of an orthogonal group by its group of translations.
Since the group $\O(\Z^{1,9}; \xi_9)^+$ is a discrete subgroup of this group of isometries,
Bieberbach theorem shows that there is a finite index subgroup $P^*$ in $\O(\Z^{1,9}; \xi_9)^+$ acting by translations on the horosphere $\partial H_{\xi_9}(3)$.
For $g$ in $P^*$, the translation length on $\partial  H_{\xi_9}(3)$ with respect to $\dist_{euc}$ is equal to $\dist_{euc}(\bfe_0,g(\bfe_0))$.
And if $g\in P^*\setminus\{\id\}$, $\langle \bfe_0,g(\bfe_0)\rangle$ is an integer $\geq 2$, so the hyperbolic distance satisfies $\dist(\bfe_0,g(\bfe_0))\geq \arcosh(2)$; in view of Equation~\eqref{eq_eucl}, the euclidean distance is bounded by
\begin{equation}
\dist_{euc}(\bfe_0,g(\bfe_0))\geq 2\sinh\left( \frac12 \arcosh(2) \right)=\sqrt{2}.
\end{equation}
Since the conjugacy $h_\epsilon$ is a similitude that multiplies distances by $\epsilon/3$, 
the group $P^*$ acts also by 
translations on each of the  horospheres  $\partial H_{\xi_9}(\epsilon)$, with euclidean translation length bounded from 
below by 
\begin{equation}
\Delta_{euc}(\epsilon):=
\frac{\eps\sqrt{2}}{3}.
\end{equation}
In terms of the hyperbolic metric, this says that for all $x\in \partial H_{\xi_9}(\epsilon)$ and all  $g\in P^*\setminus\{\id\}$ one has
\begin{equation}\label{eq_Delta}
\dist(x,gx)\geq \Delta(\epsilon):=2\arcsinh\left( \frac{\eps}{3\sqrt{2}}\right).
\end{equation}

Denoting by $P^*_n$ the subgroup of $P^*$ consisting of all $n$-th powers of elements of $P^*$,
one gets the following result. 

\begin{lem}\label{lem:P}
There is a finite index, normal subgroup $P^*\subset  \O(\Z^{1,9}; \xi_9)^+$ with the following properties. For every $\epsilon >0$, there is 
a positive real number $\Delta(\epsilon)$, such that
\begin{enumerate}
\item the group $P^*$ is isomorphic to the abelian group $\Z^8$;
\item it acts by translations on the euclidean horosphere $\partial H_{\xi_9}(\epsilon)$; 
\item the translation length of any element of $P^*_n\setminus \{\id\}$ on $\partial H_{\xi_9}(\epsilon)$ for the euclidean distance is at least $n \Delta_{euc}(\epsilon)=n\epsilon\sqrt{2}/3$
 and for all $x\in\partial H_{\xi_9}(\epsilon)$,
    $\dist(x,gx)\geq \Delta(n\epsilon)=2\arcsinh\left(n\frac{\eps}{3\sqrt{2}}\right).$
\end{enumerate}
\end{lem}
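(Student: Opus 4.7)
The plan is to view $\O(\Z^{1,9};\xi_9)^+$ as a discrete group of affine isometries of the euclidean 8-space $\partial H_{\xi_9}(3)$, invoke Bieberbach's theorem to extract a finite-index normal translation subgroup, and then convert an integrality estimate on $\Z^{1,9}$ into the required translation length bounds.

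First I would check that $\O(\Z^{1,9};\xi_9)^+$ is a crystallographic subgroup of $\Isom(\partial H_{\xi_9}(3))=\Isom(\R^8)$: discreteness is inherited from $\O(\Z^{1,9})^+$ in $\O(1,9)$, and cocompactness on the horosphere follows from the classical identification (up to finite index) of $\O(\Z^{1,9};\xi_9)^+$ with the affine Weyl group $W(\widetilde{E_8})$, whose translation subgroup is the rank-$8$ root lattice of $E_8$. Bieberbach's theorem then furnishes a finite-index normal subgroup $P^*\subset \O(\Z^{1,9};\xi_9)^+$ whose image in $\O(8)$ is trivial, i.e.\ $P^*$ acts on $\partial H_{\xi_9}(3)$ by pure translations; as a cocompact lattice in $\R^8$ it is isomorphic to $\Z^8$, proving~(1). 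Assertion~(2) is then immediate: every $g\in P^*$ fixes $\R\xi_9$ so preserves each horosphere $\partial H_{\xi_9}(\epsilon)$, and conjugation by the similitude $h_\epsilon$ intertwines the translation action on $\partial H_{\xi_9}(3)$ with a translation action on $\partial H_{\xi_9}(\epsilon)$.

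For~(3), the key input is the integrality of the intersection form together with the fact that $\bfe_0\in \partial H_{\xi_9}(3)$ (since $\bfe_0\cdot \xi_9=3$). For any $g\in P^*\setminus\{\id\}$, the vector $g\bfe_0\in \Z^{1,9}\cap \Hyp_9$ differs from $\bfe_0$, so $\bfe_0\cdot g\bfe_0$ is an integer that is $\geq 2$ (it would equal $1$ only if $g\bfe_0=\bfe_0$, by the reverse Cauchy--Schwarz inequality on the positive cone). Hence $\dist(\bfe_0,g\bfe_0)\geq \arcosh(2)$, and Equation~\eqref{eq_eucl} gives
\[
\dist_{euc}(\bfe_0,g\bfe_0)\;\geq\; 2\sinh\!\left(\tfrac12\arcosh 2\right)\;=\;\sqrt 2
\]
on $\partial H_{\xi_9}(3)$. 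Because $h_\epsilon$ multiplies euclidean distances by $\epsilon/3$, every nontrivial element of $P^*$ has euclidean translation length $\geq \epsilon\sqrt 2/3=\Delta_{euc}(\epsilon)$ on $\partial H_{\xi_9}(\epsilon)$; iterating $n$ times multiplies translation vectors by $n$, yielding the lower bound $n\Delta_{euc}(\epsilon)$ for elements of $P^*_n\setminus\{\id\}$. Feeding this estimate back into Equation~\eqref{eq_eucl} converts it to the hyperbolic bound $\dist(x,gx)\geq 2\arcsinh(n\epsilon/(3\sqrt 2))=\Delta(n\epsilon)$.

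The step I expect to be the main obstacle is the full-rank assertion in~(1): Bieberbach on its own only produces a translation lattice of rank $k\leq 8$. Showing $k=8$ genuinely requires arithmetic information, either through the Eichler--Siegel transvections attached to vectors in $\xi_9^\perp\cap \Z^{1,9}$ (which directly exhibit eight $\Z$-independent translations preserving $\xi_9$), or via the identification of a finite-index subgroup with $W(\widetilde{E_8})$ that supplies the $E_8$ root lattice of translations. Once this cocompactness/rank input is in place, the remaining ingredients are routine euclidean and hyperbolic trigonometry.
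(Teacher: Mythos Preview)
Your proposal is correct and follows essentially the same route as the paper: Bieberbach for the translation subgroup, the integrality bound $\bfe_0\cdot g\bfe_0\geq 2$ to get $\dist_{euc}\geq\sqrt{2}$ on $\partial H_{\xi_9}(3)$, rescaling by the similitude $h_\epsilon$, and conversion via Equation~\eqref{eq_eucl}. You are in fact slightly more scrupulous than the paper's main text about the rank-$8$ issue; the paper simply invokes Bieberbach and defers the explicit identification $P^*\simeq\bfE_8\simeq\Z^8$ (via the affine Weyl group of type $E_8$) to a remark following the lemma, which is exactly the input you flag as needed.
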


\begin{rem}[See~\cite{Cantat-Dolgachev}]
One can, in fact, be much more precise. 
The orthogonal complement of the vector
$\xi_9$ 
is a sublattice $\bfE_9\subset \Z^{1,9}$. A basis of $\bfE_9$ is formed by the vectors
$\alpha_0 =\bfe_0 -\bfe_1 -\bfe_2 -\bfe_3$  and $\alpha_i =\bfe_i -\bfe_{i+1}$, for $i=1$, $\ldots$, $8$.
The intersection matrix  $(\alpha_i\cdot\alpha_j )$ is equal to $\Gamma_9 - 2\id_9$ , where $\Gamma_9$ is the incidence matrix of the 
Dynkin diagram of type $E_9$. 
In particular, each class $\alpha_i$ has self- intersection $-2$ and determines an involutive isometry of $\Z^{1,9}$, namely
$s_i \colon x\mapsto x+(x\cdot \alpha_i)\alpha_i$. By definition, these involutions generate the Weyl (or Coxeter) group $W_9$, and one shows that $W_9$ has finite index
in $\O(\Z^{1,9}; \xi_9)^+$.
 When restricted to $\bfE_9$, the intersection form is degenerate; its radical is generated by the vector $\xi_9$ and the lattice $\bfE_8 \simeq \bfE_9/\Z \xi_9$
  is isomorphic to the root lattice of finite type $E_8$. Then, $W_9$ is isomorphic to the affine Weyl group of type $E_8$, and fits in the extension
$0\to \bfE_8\to W_9\to W_8\to 1 $ where the injection $\iota \colon \bfE_8 \to W_9$ of the additive group $\bfE_8\simeq \Z^8$ into $W_9$ is defined by the following formula: for $w\in \bfE_8$,
\begin{equation}
 \iota(w):v\in \Z^{1,9}\mapsto v - (v\cdot \xi_9)w + \left( (w\cdot v) - \frac{1}{2} (v\cdot  \xi_9) (w\cdot w)\right)\xi_9.
 \end{equation}
 The quotient $W_8$ is a finite reflection group. In Lemma~\ref{lem:P}, one can take $P^*=\iota(\bfE_8)$.
 \end{rem}

\subsection{Halphen pencils and Halphen surfaces (see \cite{Dolgachev:Halphen, Cantat-Dolgachev})}\label{par:P(X)}

A {\bf{Halphen pencil}} ${\mathfrak{p}}$ of index $m$ is an irreducible  pencil of plane curves of degree $3m$ with $9$ base-points of multiplicity $\geq m$; here, infinitely near points may be included in the list of base-points. 
In particular, a Halphen pencil of index $1$ is made of cubic curves with no fixed component. 
A   smooth rational surface $X$ is a {\bf{Halphen surface}} if there exists an integer $m > 0$ such that the linear system $\vert - mK_X\vert$ is of dimension $1$, has no fixed component, and has no base-point (here, $K_X$ denotes the canonical divisor). 
The index of a Halphen surface is the smallest possible value for such a positive integer $m$.

As shown in \cite{Cantat-Dolgachev}, every Halphen surface $X$ is obtained by blowing-up the nine base-points of a Halphen pencil ${\mathfrak{p}}$.
In particular, the Picard number of $X$ is equal to $10$: there is a basis of the N\'eron-Severi group given by the class $\bfe_0$ of a line 
in $\P^2$, and the classes $\bfe_j$ of the $9$ exceptional divisors (coming from the $9$ base-points). 
The pencil ${\mathfrak{p}}$ determines a 
fibration $\pi\colon X\to \P^1$; this fibration is the same as the one given by the linear system $\vert - mK_X\vert$.
Two cases may appear: if the general fiber of $\pi$ is a smooth curve of genus~$1$ we say that $\pi$ is elliptic; otherwise, the
general fiber is a rational curve with a cusp and one says that $\pi$ is quasi-elliptic. Quasi-elliptic examples occur only in characteristic
$2$ and $3$ (see the first sections of~\cite{Bombieri-Mumford} or the last chapter of~\cite{Cossec-Dolgachev}); the cusps of the fibers form a smooth curve which will be denoted by $\Sigma_X\subset X$ (computing the intersection of   $\Sigma_X$ with a fiber, one sees that
$m$ divides $p$, so $m\leq 3$).

When $m>1$, the pencil contains a unique cubic curve with multiplicity $m$. 
The class of this curve in the Picard-Manin space is the class of $-K_X$ and it 
is equal to $\xi_X=3\bfe_0-\bfe_1-\ldots - \bfe_9$; if we want to refer to the pencil, rather
than to the surface, we shall denote this class by $\xi_{\mathfrak{p}}$ (it may be considered as a point of $\NS(X)$ or of $\ZZ$). Thus, $\xi_X$ corresponds to the anti-canonical vector $\xi_9$ under
the natural isometry $\Z^{1,9}\to \NS(X)$.
The model $\Z^{1,9}$ does not depend on the pencil ${\mathfrak{p}}$ (or on the Halphen surface $X$). 

The group of automorphisms $\Aut(X)$ acts on $\NS(X)$ and its image is a group of isometries fixing the anti-canonical class $\xi_X$. 
Since $\Aut(X)$ preserves the canonical bundle, it permutes the sections of $\vert-mK_X\vert$, and it preserves the fibration $\pi$ (permuting its
fibers). 

Consider the group $\Bir(\P_\bfk^2; {\mathfrak{p}})$
of birational transformations of the plane preserving the pencil ${\mathfrak{p}}$. 
 After conjugacy by the blow-up $X\to \P^2_\bfk$,  $\Bir(\P_\bfk^2; {\mathfrak{p}})$ becomes a subgroup of $\Bir(X)$ that
 permutes the fibers of $\pi$. The fibration $\pi$ is relatively minimal, which amounts to say that there
 is no exceptional divisor of the first kind in the fibers of $\pi$ (see~\cite{Grivaux}); this implies that $\Bir(\P_\bfk^2; {\mathfrak{p}})$ is contained in $\Aut(X)$ (see~\cite{Iskovskikh-Shafarevich}). 
Since $\Aut(X)$ permutes the fibers of $\pi$, we conclude that $\Bir(\P_\bfk^2; {\mathfrak{p}})=\Aut(X)$ (after conjugacy by the blow-up $X\to \P^2$): 

\begin{lem}\label{lem:Halphen-bir-aut-fib}
Let ${\mathfrak{p}}$ be a Halphen pencil. Let $X$ be the Halphen surface and $\pi\colon X\to \P^1_\bfk$ be the fibration defined by ${\mathfrak{p}}$.
Then $\Aut(X)$ permutes the fibers of $\pi$, and is conjugate to $\Bir(\P_\bfk^2; {\mathfrak{p}})$ by the blow-up of the nine
base-points of ${\mathfrak{p}}$.
\end{lem}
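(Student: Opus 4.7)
The plan is to establish the two assertions (fiber-preservation and the identification with $\Bir(\P^2_\bfk;{\mathfrak{p}})$) from the properties of the anticanonical linear system and the relative minimality of $\pi$, as already indicated in the discussion preceding the statement.

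First, I would handle the claim that $\Aut(X)$ permutes the fibers of $\pi$. By definition of a Halphen surface of index $m$, the fibration $\pi\colon X\to\P^1_\bfk$ coincides (up to an automorphism of $\P^1_\bfk$) with the morphism defined by the linear system $\vert -mK_X\vert$. Any $\varphi\in \Aut(X)$ preserves the canonical class $K_X$, hence acts on $H^0(X,\sO_X(-mK_X))$. Projectivizing this action produces an automorphism $\bar\varphi\in \Aut(\P^1_\bfk)$ such that $\pi\circ\varphi=\bar\varphi\circ\pi$; in particular the fibers of $\pi$ are permuted.

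Second, for the identification $\Bir(\P^2_\bfk;{\mathfrak{p}})=\Aut(X)$, I conjugate by the blow-up $\sigma\colon X\to\P^2_\bfk$ of the nine base-points of ${\mathfrak{p}}$, and I must show containment in both directions. A transformation $f\in\Bir(\P^2_\bfk;{\mathfrak{p}})$ lifts to $\tilde f:=\sigma^{-1}\circ f\circ \sigma\in \Bir(X)$, which permutes the fibers of $\pi$ by construction. Here the key input is that $\pi$ is relatively minimal (no $(-1)$-curve is contained in a fiber), as recorded in \cite{Grivaux}. The classical rigidity statement for birational self-maps of relatively minimal fibrations (see \cite{Iskovskikh-Shafarevich}) then forces $\tilde f$ to be everywhere defined with everywhere-defined inverse, that is, $\tilde f\in \Aut(X)$. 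Conversely, any $\varphi\in \Aut(X)$ pushes forward through $\sigma$ to a birational transformation $\sigma\circ\varphi\circ\sigma^{-1}$ of $\P^2_\bfk$, and by the first step it permutes the members of ${\mathfrak{p}}$, hence lies in $\Bir(\P^2_\bfk;{\mathfrak{p}})$.

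The only non-formal step is the second one, so the main obstacle (already settled by the cited references) is the relative minimality of the Halphen fibration together with the rigidity principle that promotes a fiber-preserving birational self-map of a relatively minimal genus-one (or quasi-elliptic) fibration to a biregular automorphism. Everything else reduces to unwinding the definitions of Halphen surface and Halphen pencil and to the invariance of $K_X$ under $\Aut(X)$.
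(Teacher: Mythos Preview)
Your proof is correct and follows essentially the same approach as the paper: the argument preceding the lemma uses the invariance of $K_X$ under $\Aut(X)$ to deduce that automorphisms permute the fibers of $\pi=\vert -mK_X\vert$, and then invokes relative minimality of $\pi$ (via \cite{Grivaux}) together with \cite{Iskovskikh-Shafarevich} to promote fiber-preserving birational self-maps to automorphisms, exactly as you do. The two directions of the identification $\Bir(\P^2_\bfk;\mathfrak{p})=\Aut(X)$ are handled in the same way.
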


Let $P_0(X)\subset \Aut(X)$ (resp. $P_0({\mathfrak{p}})\subset \Bir(\P^2_\bfk, {\mathfrak{p}})$) be the subgroup of automorphisms 
$f$ such that the action $f^*\colon \NS(X)\to \NS(X)$ corresponds to an element of the abelian group $P^*\subset \O(\Z^{1,9})^+$ given 
by Lemma~\ref{lem:P}. 

In the following lemma, $\xi_X$ is viewed as a boundary point of $\Hyp_\infty$, via the natural embedding 
$\NS(X)\to \ZZ$,  the horosphere 
$\partial H_{\xi_X}(\epsilon)$ is the horosphere in $\Hyp_\infty$ centered at $\xi_X$, and the real number $\Delta(\epsilon)$ is defined in Equation \eqref{eq_Delta}.

\begin{lem}\label{lem:PX}
The group $P_0({\mathfrak{p}})$ is a normal subgroup of $\Bir(\P^2_\bfk; {\mathfrak{p}})$. For every $\epsilon >0$, every non-trivial element $f$ of this group 
acts
\begin{itemize}
\item on $\partial H_{\xi_X}(\epsilon)\cap  \Hyp_X$ as a euclidean translation, of length $\geq \Delta_{euc}(\epsilon)$;
\item on $\partial H_{\xi_X}(\epsilon)\subset \Hyp_\infty$ as a fixed point free isometry that satisfies 
\[
\dist(u,f(u))\geq \Delta(\epsilon)
\]
 for every $u\in \partial H_{\xi_X}(\epsilon)$.
\end{itemize}
Moreover, for each $l>0$, there exists an integer $n\geq 1$ that does not depend on the pencil ${\mathfrak{p}}$, 
such that 
\[ \dist_{\Hyp_\infty}(x,g^n (x))\geq l
\]
for each $g\in P_0({\mathfrak{p}})\setminus\{1\}$
and each point $x$ in the horosphere $\partial H_{\xi_X}(\epsilon)$, .
\end{lem}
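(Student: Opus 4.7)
The plan is to transfer Lemma~\ref{lem:P} from $\Z^{1,9}$ to $\NS(X)$ via the natural isometry sending $\xi_9$ to $\xi_X$, and then to upgrade the euclidean action on $\Hyp_X$ to the ambient $\Hyp_\infty$ using the $f$-invariant orthogonal decomposition $\R^{1,\infty}=\NS(X;\R)\oplus\NS(X;\R)^\perp$.

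First, normality is essentially automatic: by Lemma~\ref{lem:Halphen-bir-aut-fib}, $\Bir(\P^2_\bfk;\mathfrak{p})$ is identified with $\Aut(X)$, which acts on $\NS(X)$ preserving $\xi_X$, giving a homomorphism $\rho\colon\Bir(\P^2_\bfk;\mathfrak{p})\to\O(\Z^{1,9};\xi_9)^+$; since $P^*$ is normal in the target (Lemma~\ref{lem:P}), so is its preimage $P_0(\mathfrak{p})$. The first bullet then reads off Lemma~\ref{lem:P}(2): for nontrivial $f\in P_0(\mathfrak{p})$ the element $\rho(f)\in P^*\setminus\{\id\}$ acts on $\partial H_{\xi_X}(\epsilon)\cap\Hyp_X$ as a euclidean translation of length at least $\Delta_{euc}(\epsilon)$, say by a vector $v$.

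The key step is the second bullet. Since $f$ acts isometrically on $\ZZ\otimes\R$ and preserves $\NS(X;\R)$, it also preserves the negative-definite orthogonal complement $\NS(X;\R)^\perp$. Pick a basepoint $u_0\in E_0:=E\cap\Hyp_X$ inside the affine euclidean horosphere $E:=\partial H_{\xi_X}(\epsilon)\subset\Hyp_\infty$, and write $f|_E$ as $u\mapsto f(u_0)+L(u-u_0)$. The orthogonal decomposition of $\R^{1,\infty}$ transfers to an $L$-invariant orthogonal decomposition $E^{\mathrm{lin}}=E_0^{\mathrm{lin}}\oplus E_1^{\mathrm{lin}}$ of the linear model of $E$. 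Because $f$ restricts to the translation by $v$ on $E_0$, the linear part $L$ must restrict to the identity on $E_0^{\mathrm{lin}}$; decomposing an arbitrary $u\in E$ as $u=u_0+w_0+w_1$ with $w_i\in E_i^{\mathrm{lin}}$, one finds
\[
f(u)-u=v+(L-\id)(w_1),
\]
an orthogonal sum with nonzero $E_0^{\mathrm{lin}}$-component $v$. Hence $|f(u)-u|_{euc}\geq|v|\geq\Delta_{euc}(\epsilon)$, so $f$ is fixed-point-free on $E$, and formula~\eqref{eq_eucl} converts the euclidean bound into $\dist(u,f(u))\geq\Delta(\epsilon)$.

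For the uniform statement, I would apply the same analysis to $g^n$. Since $P^*\simeq\Z^8$ is torsion-free, $\rho(g)^n\in P^*_n\setminus\{\id\}$ whenever $\rho(g)\neq\id$, and Lemma~\ref{lem:P}(3) yields a translation length $\geq n\Delta_{euc}(\epsilon)$ on $E_0$; the displacement argument above then gives $\dist(x,g^n(x))\geq\Delta(n\epsilon)=2\arcsinh\bigl(n\epsilon/(3\sqrt{2})\bigr)$ for every $x\in E$, uniformly in the pencil $\mathfrak{p}$ and in $g$. Given $l>0$, choose $n$ with $\Delta(n\epsilon)\geq l$. The main technical point I see is the identity $L|_{E_0^{\mathrm{lin}}}=\id$: it relies essentially on the fact that $f$ acts on the 8-dimensional subspace $E_0$ as a pure translation (rather than an arbitrary isometry), which is precisely the content extracted from Bieberbach's theorem in Lemma~\ref{lem:P}.
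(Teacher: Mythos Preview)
Your argument is correct and rests on the same core idea as the paper's: the invariant euclidean subspace $E_0:=\partial H_{\xi_X}(\epsilon)\cap\Hyp_X$ inside the horosphere $E=\partial H_{\xi_X}(\epsilon)$, on which $f$ moves every point by at least $\Delta_{euc}(\epsilon)$. The difference is only in the packaging of the second bullet. You decompose the affine isometry $f|_E$ explicitly through the $f$-invariant splitting $E^{\mathrm{lin}}=E_0^{\mathrm{lin}}\oplus E_1^{\mathrm{lin}}$ and compute $f(u)-u=v+(L-\id)(w_1)$, an orthogonal sum with $E_0^{\mathrm{lin}}$-component $v$. The paper instead dispatches this in one line: the euclidean orthogonal projection $\pi\colon E\to E_0$ is $\Bir(\P^2_\bfk;\mathfrak{p})$-equivariant (because $E_0$ is invariant) and $1$-Lipschitz, hence
\[
\dist_{euc}(u,f(u))\;\geq\;\dist_{euc}\bigl(\pi(u),\pi(f(u))\bigr)\;=\;\dist_{euc}\bigl(\pi(u),f(\pi(u))\bigr)\;\geq\;\Delta_{euc}(\epsilon).
\]
This projection trick is slightly more robust than your version: it only uses the lower bound on the displacement of $f|_{E_0}$, not that $f|_{E_0}$ is a \emph{translation}. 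In particular, the point you single out as ``the main technical point'' --- that $L|_{E_0^{\mathrm{lin}}}=\id$ --- is actually not needed. Your treatment of normality, the first bullet, and the ``moreover'' clause matches the paper's.
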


\begin{proof} 
The first property  follows directly from the definition of $P_0({\mathfrak{p}})$. For the second one, 
we use that $\partial H_{\xi_X}(\epsilon)\cap  \Hyp_X$ is a $\Bir(\P^2_\bfk; {\mathfrak{p}})$-invariant euclidean subspace of the (infinite dimensional) euclidean space $\partial H_{\xi_X}(\epsilon)$,
so the euclidean orthogonal projection commutes with $\Bir(\P^2_\bfk; {\mathfrak{p}})$.
Since it is $1$-Lipschitz, it follows that $\dist_{euc}(u,f(u))\geq \Delta_{euc}(\eps)$ for all $u\in\partial H_{\xi_X}(\epsilon)$.
By Equation~\eqref{eq_eucl}, the hyperbolic distance satisfies $\dist(u,f(u))\geq \Delta(\eps)$.
The ``moreover part'' is now a consequence of Lemma \ref{lem:P} and $\Delta(n\eps)\to \infty$ as $n\to \infty$.
\end{proof}

\subsection{Gizatullin's theorem}
The following result encapsulates a theorem of Marat Gizatullin (see~\cite{Gizatullin:1980}) and results of Georges H. Halphen and Igor Dolgachev (see~\cite{Dolgachev:Halphen}). 

\begin{thm}\label{thm:giza}
There is an integer $B_H\leq 8!$ with the following property. 
Let $\bfk$ be an algebraically closed field. Let $f\in \Cr_2(\bfk)$ be a Halphen twist.
Then, up to conjugacy, $f$ preserves a unique Halphen pencil ${\mathfrak{p}}$ (of some index $m\geq 1$), and:
\begin{itemize}
\item this 
Halphen pencil provides an elliptic fibration of the corresponding Halphen surface $X$, 
\item the kernel of the homomorphism $\Aut(X)\to \GL(\NS(X))$ is finite, of order $\leq B_H$.
\end{itemize}
\end{thm}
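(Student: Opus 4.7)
The plan is to combine the action of $f$ as a parabolic isometry on $\Hyp_\infty$ with the geometry of rational surfaces whose anticanonical class has self-intersection zero, as in Gizatullin's approach~\cite{Gizatullin:1980}. Since $f$ is a Halphen twist, $f_*$ acts as a parabolic isometry of $\Hyp_\infty$, fixing a unique isotropic ray $\R\xi\subset\partial\Hyp_\infty$ with $\xi\cdot\bfe_0>0$. First, I would show that $\xi$ can be chosen to be a primitive integral class of $\ZZ$: on a sufficiently large $f^*$-stable rational surface model $X\to\P^2_\bfk$ on which $f$ becomes an automorphism of a relatively minimal model, $f^*$ acts on the finite-dimensional lattice $\NS(X)$ preserving the intersection form of signature $(1,\rho-1)$, and the eigenspace for the eigenvalue $1$ on the isotropic cone is one-dimensional and $\Q$-rational, hence spanned by a primitive integral vector.

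Next I would identify $\xi$ with $m(-K_X)$ on this model. Writing $\xi=a\bfe_0-\sum_{j=1}^N n_j\bfe_j$, the isotropy $\xi^2=0$ combined with effectiveness (obtained from Riemann--Roch since $\xi^2=0$ and $\xi\cdot\bfe_0>0$), $f^*$-invariance, and the fact that the general member of $|\xi|$ has arithmetic genus $1$ by adjunction, forces $a=3m$, each $n_j=m$, and $N=9$. This identifies $X$ as a Halphen surface of index $m$, and the image of $|{-}mK_X|$ in $\P^2_\bfk$ as the required Halphen pencil $\mathfrak{p}$. Uniqueness of $\mathfrak{p}$ follows at once from uniqueness of the parabolic fixed direction: any second $f$-invariant pencil of curves of genus $\leq 1$ would produce a second $f$-fixed boundary point of $\Hyp_\infty$. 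The elliptic (rather than quasi-elliptic) nature of $\pi\colon X\to\P^1_\bfk$ needs separate justification in small characteristic: quasi-elliptic fibrations occur only when $\fieldchar(\bfk)\in\{2,3\}$, and in those cases the fiberwise relative automorphism group is unipotent (a $\bbG_a$ on the smooth locus of each fiber), which is incompatible with an element of quadratic degree growth in $\Cr_2(\bfk)$.

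For the bound on $K=\ker(\Aut(X)\to\GL(\NS(X)))$, note that any $g\in K$ fixes the class $[\bfe_j]$ of each exceptional divisor; since each $[\bfe_j]$ is the class of a $(-1)$-curve with a unique effective representative, $g$ preserves $\bfe_j$ setwise. Restricting $g$ to the generic fiber $F_\eta$ of $\pi$ (an elliptic curve over $\bfk(\P^1_\bfk)$), the automorphism $g|_{F_\eta}$ fixes the $9m$ intersection points $F_\eta\cap\bfe_j$. Taking one such point as origin turns $g|_{F_\eta}$ into an element of $\Aut(F_\eta,0)\ltimes F_\eta^{\mathrm{tors}}$, and the requirement that it fix the $9m-1$ remaining marked points embeds $K$ into a finite extension of a subgroup of $S_8$ (permutations of the eight other marked divisors once one is chosen as the origin). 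Combining with the classical bound $|\Aut(E,0)|\leq 24$ for an elliptic curve yields finiteness of $K$; tracking the combinatorics provides the explicit universal bound $|K|\leq 8!$, independent of $X$ and $\mathfrak{p}$.

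The main obstacle is the identification $\xi=m(-K_X)$ and the reduction to a $9$-point blow-up model. This is the heart of Gizatullin's theorem and requires intertwining Riemann--Roch, the isotropy of $\xi$, and the structure of $f^*$-invariant models of $\P^2_\bfk$; by comparison, the final bound on $K$ is more mechanical once this identification is in place.
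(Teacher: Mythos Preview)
Your outline for the existence of the invariant Halphen pencil follows Gizatullin's approach, as does the paper, so that part is fine to leave as a citation with the sketch you give.

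There are, however, two genuine gaps.

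\textbf{Ruling out the quasi-elliptic case.} Your claim that the unipotence of the fiberwise automorphism group is ``incompatible with quadratic degree growth'' is not justified and is not obviously true: translations along a quasi-elliptic fibration do act on $\NS(X)$, and you have not explained why such an action cannot be parabolic. The paper's argument is direct and does not go through the automorphism group of the fibers at all: a quasi-elliptic fibration carries a canonical curve of cusps $\Sigma_X$, which any automorphism of $X$ must preserve; since $[\Sigma_X]\cdot\xi_X>0$, the class $[\Sigma_X]+m\xi_X$ has positive self-intersection for $m\gg 0$, so $f_X$ would fix an interior point of $\Hyp_X$ and be elliptic, a contradiction.

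\textbf{Bounding $|K|$.} Your argument restricts $g\in K$ to the generic fiber $F_\eta$, but you never explain why $g$ preserves $F_\eta$: elements of $K$ preserve the fibration $\pi$, not the individual fibers, and you must first control the induced action $\rho\colon K\to\Aut(\P^1_\bfk)$. This is exactly where the paper spends effort, showing $K^0$ is trivial by a case analysis on $\dim\rho(K^0)$ (using that $\pi$ has singular fibers, and that an invariant pencil of rational curves would give a second fixed boundary point). Even granting faithfulness on $F_\eta$, your $S_8$ bound is unclear: you already established that $g$ preserves each $\bfe_j$ setwise, so there is no permutation of the exceptional divisors to speak of, and for $m>1$ the scheme $F_\eta\cap\bfe_j$ has degree $m$, not $1$. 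The paper's route to $8!$ is quite different: since $K$ fixes each $\bfe_j$, it descends to a finite subgroup $K_0\subset\PGL_3(\bfk)$ fixing all nine base points (including infinitely near ones); viewing $\PGL_3\subset\P^8$, the condition $h(q_k)=q_k$ is an equation of degree at most $\max(k-1,1)$ in the matrix entries (the degree coming from the order of the jet needed at infinitely near points), and Bezout gives the bound $1\cdot 1\cdot 2\cdots 8=8!$.
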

In other words, one can conjugate $f$ by a birational transformation $g\colon \P^2_\bfk\dasharrow \P^2_\bfk$
and find a Halphen pencil ${\mathfrak{p}}$, of some index $m$, such that $f':=g\circ f \circ g^{-1}$ is contained
in $\Bir(\P^2_\bfk; {\mathfrak{p}})$. Blowing-up the base-points of ${\mathfrak{p}}$, $f'$ becomes an automorphism
of a rational surface $X$, preserving a relatively minimal fibration $\pi:X\to \P^1_\bfk$ of genus $1$. 
The class of a general fiber is equal to $m\xi_X$, where $\xi_X=\xi_{\mathfrak{p}}=3\bfe_0-\bfe_1-\ldots -\bfe_9$. 
Viewed in $\Hyp_\infty$, this class $\xi_{\mathfrak{p}}$ gives the unique boundary point fixed by $f'$. Conversely, if $h$ is an element of $\Cr_2(\bfk)$ that fixes $\xi_{\mathfrak{p}}$, then $h$ determines an automorphism of $X$ preserving
the fibration $\pi$; then, either $h$ is elliptic, or it is a Halphen twist. If $h$ is a Halphen twist, then some iterate of $h$ is in the group $P_0(X)$. 

\begin{eg}
Consider, over the field of complex numbers, an elliptic curve $E=\C/\Lambda$, and form the product $\P^1_\C\times E$.
The involution $\sigma \colon ([x:y],z)\mapsto ([-x:y],-z)$ of $\P^1_\C\times E$ preserves exactly two fibers $\{[0:1]\}\times E$
and $\{[1:0]\}\times E$, with $4$ fixed points on each of them. Blowing up these $8$ points, and then taking the quotient by (the lift of) 
$\sigma$, we get a rational surface $X_E$, with a genus~$1$ fibration $\pi\colon X_E\to \P^1_\C$, and exactly two singular fibers
(each of type ${\rm{I}}_0^*$ in Kodaira's table). The multiplicative group acts by $([x:y],z)\mapsto ([sx:y],z)$ on $\P^1_\C\times E$, 
hence also on $X_E$. So, the kernel of the homomorphism $\Aut(X_E)\mapsto \GL(\NS(X_E))$ contains a multiplicative group; in particular 
it is infinite. According to Theorem~\ref{thm:giza}, there is no Halphen twist preserving the fibration $\pi\colon X_E\to \P^1_\C$. This can be seen 
directly on that example: a finite index subgroup of $\Aut(X_E)$ preserves the $8$ exceptional curves of self-intersection $-2$ contained in 
the fibers above $[0:1]$ and $[1:0]$, as well as the class of a general fiber; as a consequence, this finite index subgroup acts trivially on the 
subspace $\xi_9^\perp\subset \Z^{1,9}$, hence also on $\Z^{1,9}$. So, $\Aut(X_E)$ cannot contain any Halphen twist.\end{eg}

\begin{proof}  The first assertion, i.e. the existence of an invariant Halphen pencil ${\mathfrak{p}}$ (up to conjugacy) is due
to Gizatullin (see~\cite{Gizatullin:1980}): his proof works also in characteristic $2$ or $3$, as noted in \cite{Cantat-Dolgachev}.
Blowing-up the base-points of ${\mathfrak{p}}$, one gets a Halphen surface $X$, and $f$ is conjugate to 
an automorphism $f_X$ of $X$; this automorphism preserves the fibration $\pi\colon X\to \P^1$ given by ${\mathfrak{p}}$.

If $\pi$ were quasi-elliptic, $f_X$ would preserve the cusp curve $\Sigma_X$. Thus, $f_X$ would preserve the class $[\Sigma_X]$ of the curve of cusps, as well as the class $\xi_X$ of a general fiber. Since the intersection product $[\Sigma_X]\cdot \xi_X$ is positive, 
$f_X$ should preserve a class $[\Sigma_X]+m\xi_X$ with positive self-intersection, and $f_X$ should be elliptic, in contradiction with $f$ being a Halphen twist. Thus, $\pi$ is an elliptic fibration: its general fibers are curves of genus $1$.
 
Consider the kernel $K$ of the homomorphism $\Aut(X)\to \GL(\NS(X))$. Since ${\mathsf{Pic}}^0(X)$ is trivial,  $K$ 
is a linear algebraic group; we denote by $K^0$ the connected component of the identity in $K$. First, we show that $K$ is finite, or equivalently that $K^0$ is trivial. 
Recall from Lemma~\ref{lem:Halphen-bir-aut-fib} that $\Aut(X)$ permutes the fibers of $\pi\colon X\to \P^1_\bfk$: this
provides an algebraic homomorphism $\rho\colon K\to \Aut(\P^1)$ such that $\pi\circ g=\rho(g)\circ \pi$ for all $g\in K$. The kernel of
$\rho$ is a linear algebraic group, preserving each of the fibers of $\pi$. Since every algebraic homomorphism of a connected linear algebraic
group to a curve of genus $1$ is trivial, the kernel of $\rho$ is finite. The Euler characteristic of $X$ (using $\ell$-adic cohomology if $\fieldchar(\bfk)>0$) is 
equal to $12$ because $X$ is obtained from $\P^2_\bfk$ by a sequence of $9$ 
successive blow-ups; hence, $\pi$ has at least one singular fiber (see~\cite{Cossec-Dolgachev}, Proposition 5.1.6, p. 290). Since, the group $\rho(K^0)\subset \PGL_2(\bfk)$ fixes all the 
critical values of $\pi$, it fixes at least one point; in particular, $\dim(K^0)=0$, $1$, or $2$.  If $\dim(K^0)=1$, the (Zariski closures) of the orbits of $K^0$ form a pencil of rational curves, and this 
pencil is $\Aut(X)$-invariant, because $K^0$ is a normal subgroup of $\Aut(X)$. Then, $f_X$ would preserve two pencils,
the Halphen pencil and this pencil of rational curves,
and $f$ would not be a Halphen twist: the two pencils would provide two fixed points of $f$ on the
boundary of $\Hyp_\infty$, and $f$ would not be a parabolic isometry of $\Hyp_\infty$. 
This contradiction shows that $\dim(K^0)\neq 1$.
If the dimension of $K^0$ is $2$, then $\rho(K^0)$ is the group of affine
transformations $z\mapsto az+b$ of the line;  the subgroup of elements $g$ in $K^0$ such that $\rho(g)$ is a translation $z\mapsto z+b$ is invariant under 
the action of $\Aut(X)$ on $K^0$ by conjugacy, its dimension is equal to $1$, and we also get a contradiction. Thus, $K$ is finite.

Now, we show that $\vert K\vert\leq 8!$. Consider the birational morphism $\tau\colon X\to \P^2_\bfk$ given by (the inverse) of the blow-up
of the base-points of the pencil ${\mathfrak{p}}$. Since $K$ acts trivially on $\NS(X)$, it preserves the classes of the $9$  irreducible curves contracted by $\tau$, and since these curves have negative self-intersections it preserves each of these curves. Thus, there is a finite subgroup $K_0$ of $\Aut(\P^2_\bfk)$
and an isomorphism $\varphi\colon K\to K_0$ such that $\varphi(g)\circ \tau=\tau\circ g$ for every $g\in K$. The group $K_0$ preserves 
$\mathfrak{p}$, permuting its members but fixing each of its base-points (including infinitely near ones). Conversely, the subgroup $G$
of $\Aut(\P^2_\bfk)$ that preserves each of these $9$ points is an algebraic subgroup that lifts to a subgroup of $\Aut(X)$, and this subgroup acts trivially on $\NS(X)$
because it preserves the vectors $\bfe_1,\dots, \bfe_9$ of the corresponding geometric basis as well as the vector $\bfe_0$ (since $G$ preserves the class of a line in $\P^2_\bfk$). 
Thus, $G$   coincides with $K_0$ and is finite. So, we just have to prove that $\vert G\vert \leq 8!$.
For that, we identify $\Aut(\P^2_\bfk)=\PGL_3(\bfk)$ to the open subset of $\P^8_\bfk$ corresponding to the coefficients $[h_{i,j}]$ of matrices
such that $\det(h_{i,j})\neq 0$. The finite group $G$ is determined by $9$ algebraic equations $h(q_k)=q_k$, where the $q_k$ are 
the base-points of ${\mathfrak{p}}$. Let us show that the degrees of these equations is bounded by $8$. Indeed, if $h=(h_{i,j})$ is an element of $\PGL_3(\bfk)$ and $q_0$ is a point of $\P^2_\bfk$, then the equation $h(q_0)=q_0$ is linear in the
coefficients $h_{i,j}$. Then, if we blow-up $q_0$ and choose a point $q_1$ on the exceptional divisor, the equation $h(q_1)=q_1$ is linear
in the coefficients of $Dh_{q_0}$, and therefore also in the coefficients of $h$. Going on one step further, blowinq-up $q_1$ and choosing a point 
$q_2$ on the exceptional divisor, the equation $h(q_2)=q_2$ is a linear constraint on the second jet of $h$ at $q_0$; thus, it is a quadratic equation in the coefficients $[h_{i,j}]$ (\footnote{ To see it, assume $h$ fixes the point $[0:0:1]$, then in the affine coordinates $(x,y)$ we have 
$$
h\left(  \begin{array}{c} x\\ y \end{array} \right)=\left(  \begin{array}{c} \frac{a x+by}{1+ux+vy}\\ \frac{cx+dy}{1+ux+vy} \end{array} \right)
=\left(\begin{array}{c}
(ax+by)(1-(ux+vy)+(ux+vy)^2+\cdots)\\
(cx+dy)(1-(ux+vy)+(ux+vy)^2+\cdots)
\end{array}\right).
$$
Thus, the coefficients of the jet of degree $d$ of $h$ at the origin $(0,0)$ are polynomial functions of degree $\leq d$ in the coefficients of $h$.
}). If we do a tower of $k$ successive blow-ups, with $q_{i+1}$ infinitely near $q_i$, then the equation $h(q_k)=q_k$ has degree 
$\max(k-1,1)$ in the coefficients of $h$. Since, in our setting, we have $9$ base-points, we get $9$ equations of respective degrees $d_1$, $\ldots$, $d_9$ 
in the coefficients $[h_{i,j}]$.  By the Bezout Theorem, the number of solutions of this system of equations, if finite, is at most $d_1\times \cdots \times d_9$. The worst case is for 
\begin{equation}
(d_1,d_2,d_3,\ldots, d_9)=(1,1,2,\ldots, 8)
\end{equation}
and gives the upper bound  $8!=40320$. 
\end{proof}

\begin{eg} Keep the notation of the proof. Given any Halphen pencil, the group $K$ can be described explicitly. We just give
two examples. First, assume that the $9$ base-points of the pencil ${\mathfrak{p}}$ contain a 
projective basis $(q_0, q_1, q_2,q_3)$ of $\P^2_\bfk$. Then $K$ is trivial. This corresponds to the generic situation. 

Now, assume that the index $m$ of $\mathfrak{p}$ is larger than $1$ and the unique multiple fiber $C$ of
the fibration $\pi\colon X\to \P^1_\bfk$ is smooth. Then, the projection $C_0=\tau(C)\subset \P^2_\bfk$ is a smooth cubic
curve, and is invariant under $K_0$. Since $K_0$ fixes at least one point of $C_0$, it acts by group-automorphisms on $C_0$. Moreover, 
 an element of $\Aut(\P^2_\bfk)$ which preserves $C_0$ pointwise is the identity. So, $\vert K_0\vert=\vert K\vert$ is bounded by $6$ 
 if $\fieldchar(\bfk)\neq 2,3$, by $12$ if $\fieldchar(\bfk)=3$, and by $24$ if $\fieldchar(\bfk)=2$. 
 \end{eg}

\begin{rem}
It may be the case that $K$ is automatically trivial when  $X$ supports a Halphen twist. We
don't have any counter-example to this assertion.
\end{rem}

Recall that $P_0({\mathfrak{p}})\subset \Bir(\P^2_\bfk; {\mathfrak{p}})$ is the subgroup of automorphisms 
$f\in \Bir(\P^2_\bfk; {\mathfrak{p}})$ whose image in $\GL(\NS(X))$ 
lies in the abelian group $P^*\subset \O(\Z^{1,9})^+$ given 
by Lemma~\ref{lem:P}. 

\begin{cor}\label{cor:P}
Let $f$ be a Halphen twist, and let ${\mathfrak{p}}$ be the unique $f$-invariant Halphen pencil. 
The subgroup $P_0({\mathfrak{p}})\subset \Bir(\P^2_\bfk;{\mathfrak{p}})$ is an extension 
\[
1\to K\to P_0({\mathfrak{p}})\to \Z^s\to 0
\] 
of an abelian group of rank $s \leq 8$ by a finite group $K$ with $\vert K\vert \leq B_H\leq 8!$.

There is a normal subgroup  $P({\mathfrak{p}})\triangleleft P_0(\mathfrak{p})$ of index $\leq s^{2\vert K\vert} ((\vert K\vert+1) !)$ which is a free abelian group of rank $s$. 
\end{cor}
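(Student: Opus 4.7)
The plan is to deduce the short exact sequence directly from the definition of $P_0(\mathfrak{p})$ and Theorem~\ref{thm:giza}, then to produce $P(\mathfrak{p})$ in two stages: first restrict to the centralizer of $K$, then exploit a commutator-pairing inside the resulting central extension to reach an abelian subgroup, and finally kill the residual torsion by taking $n$-th powers.

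For the exact sequence, Lemma~\ref{lem:Halphen-bir-aut-fib} identifies $\Bir(\P^2_\bfk;\mathfrak{p})$ with $\Aut(X)$, and by construction $P_0(\mathfrak{p})$ is the preimage of $P^*\cong\Z^8$ under $\Aut(X)\to\GL(\NS(X))$. Its image in $P^*$ is a subgroup of $\Z^8$, hence free abelian of some rank $s\leq 8$, and its kernel is contained in the finite group $K$ of Theorem~\ref{thm:giza}, of order at most $B_H\leq 8!$.

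Now set $G:=P_0(\mathfrak{p})$ and let $G_1:=C_G(K)$ be the kernel of the conjugation homomorphism $G\to\Aut(K)$; this is normal in $G$ of index $\leq|\Aut(K)|\leq(|K|-1)!$. Inside $G_1$, the subgroup $Z(K)=G_1\cap K$ is central, finite, and abelian, and $G_1/Z(K)$ embeds with finite index into $G/K=\Z^s$, hence is itself isomorphic to $\Z^s$. One obtains a central extension $1\to Z(K)\to G_1\to\Z^s\to 0$ making $G_1$ nilpotent of class $\leq 2$, so the commutator descends to an alternating bilinear form $\omega\colon\Z^s\times\Z^s\to Z(K)$ killed by $n:=|Z(K)|$. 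Let $H\subseteq G_1$ be the preimage of $n\Z^s$: for any $x,y\in H$ one has $[x,y]=\omega(\bar x,\bar y)=n^2\omega(\cdot,\cdot)=0$, so $H$ is abelian of index $n^s$ in $G_1$; moreover $H$ is normal in $G$ because $G$ acts trivially by conjugation on the abelian quotient $G/K$ and therefore preserves $n\Z^s$. Since $H$ is finitely generated abelian with torsion subgroup exactly $Z(K)$, the subgroup $P(\mathfrak{p}):=nH$ is characteristic in $H$---hence normal in $G$---torsion-free, and isomorphic to $n\Z^s\cong\Z^s$, of rank $s$.

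This yields the composite estimate $[G:P(\mathfrak{p})]\leq|\Aut(K)|\cdot n^s\cdot n^{s+1}$, which should fit inside the stated bound $s^{2|K|}(|K|+1)!$ using $n\leq|K|$ and $|\Aut(K)|\leq(|K|+1)!$, perhaps after a slightly sharper choice of intermediate subgroup. The main obstacle I anticipate is this final arithmetic bookkeeping rather than any group-theoretic difficulty: normality is preserved throughout by restricting to characteristic subgroups of normal subgroups, and the structural steps---centralizer passage, bilinear commutator form on a central extension of $\Z^s$, killing torsion via $n$-th powers---are each standard.
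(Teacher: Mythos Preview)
Your argument is correct and follows the same overall strategy as the paper: establish the exact sequence from Theorem~\ref{thm:giza}, pass to the centralizer $P_1=C_{P_0}(K)$, and then exploit that $P_1$ is nilpotent of class $\leq 2$ to extract a free abelian normal subgroup by taking suitable powers. The normality arguments are fine; in particular your claim that conjugation by $G$ is trivial on $G_1/Z(K)$ is justified because $[g,h]\in K\cap G_1=Z(K)$ for $g\in G$, $h\in G_1$.

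The paper's execution of the last step is more compressed than yours: it sets $n=2|K|$ (so that $|K|$ divides both $n$ and $n(n+1)/2$) and shows directly that $P=\{f^n:f\in P_1\}$ is already a subgroup via the identity $(fg)^n=g^nf^n c^{\,1+2+\cdots+n}$, where $c=[g,f]\in Z(K)$ is central in $P_1$. This single step simultaneously makes $P$ abelian, torsion-free, and normal (characteristic in $P_1$), whereas you do it in two passes (first the preimage of $n\Z^s$, then $nH$). Both routes are valid; yours is a bit more conceptual, the paper's a bit slicker.

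The only real discrepancy is the one you flag yourself: your index estimate $|\Aut(K)|\cdot n^{2s+1}$ with $n=|Z(K)|$ does \emph{not} fit under the stated bound $s^{2|K|}((|K|+1)!)$ in general (already for $s=1$, any $|K|\geq 2$ it fails, since $s^{2|K|}=1$). This is harmless for the application, which only needs a bound depending on $|K|$ and $s$ alone, but if you want the corollary exactly as stated you should switch to the paper's one-step construction with $n=2|K|$.
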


This subgroup $P({\mathfrak{p}})$ of $\Bir(\P^2_\bfk;{\mathfrak{p}})$ will play an important role in the next sections.

\begin{proof}
The previous theorem shows the first assertion.
To prove the second assertion, consider any exact sequence $1\to K \to P_0\to \Z^s \to 0$ where $K$ is a finite group, and $\vert K\vert \leq D$ for some 
positive integer $D$. The group $P_0$ normalizes $K$, acting by conjugation on it. There is a subgroup 
$P_1$ of $P_0$ of index $\leq D!$ that centralizes $K$: every element of $P_1$ commutes to every element of $K$.
Set $n=2\vert K\vert$; hence $\vert K\vert$ divides $n$ and $n(n+1)/2$.
Then, set $P=\{ f^n\vert\;  f\in P_1\}$. The first remark is that 
$P$ is a subgroup of $P_1$. Indeed, given any pair of elements $f$ and $g$ in $P_1$, we have 
$fg=gfc$ for some $c$ in $K$; then, for every $m\geq 1$, $f^mg=gf^mc^m$ and $fg^m=g^mfc^m$ because $c$ is central, 
and this gives $(fg)^m=g^mf^mc^{1+2+3+\ldots +m}$; with $m=n$ we obtain that $f^ng^n=(fg)^n$ so that 
$P$ is stable under multiplication. The second remark is that $P\cap K=\{1\}$. The
third remark is that the projection of $P$ in $\Z^s$ is an injective homomorphism, the image of which is a finite index subgroup of  $n\Z^s$. Thus, $P$ is a free abelian subgroup of $P_1$ of rank $s$ and index $\leq s^nD$. Since $n=2\vert K\vert$
we see that the index of $P$ in $P_0$ is $\leq s^{2D} D (D!)\leq s^{2D} ((D+1)!)$. Moreover, $P$ is a normal subgroup of $P_0$ (because $P_1$
is normal and $P$ is characteristic). 
\end{proof}

\section{Disjonction of horoballs}\label{par:disjonction}

In this section, we prove Corollary~\ref{prop:horo_R_separated} which says that horoballs
associated to Halphen twists are disjoint. This is the first technical input to apply results of 
small cancellation and geometric
group theory. 

Consider a family $\mathcal{C}$ of convex subsets of $\Hyp_\infty$. Let $R$ be a positive real number. By definition,  $\mathcal{C}$  is {\bf $R$-separated} if $\dist_{\Hyp_\infty}(C,C')> R$
for all pairs of distinct elements $C$, $C'\in\mathcal{C}$.
A vector $v\in \Z^{1,\infty}$ is {\bf{primitive}} if it is not a non-trivial multiple of some vector $v'\in\Z^{1,\infty}$.

\begin{thm}\label{thm:disjonc}
Let $\mathcal{P}$ be the set of integral, isotropic and primitive vectors $v$ in $\Z^{1,\infty}$ 
such that $v \cdot \bfe_0 >0$.
If $0<\epsilon<1/\sqrt{2}$, the family of horoballs $\{H_v(\epsilon)\}_{v\in\mathcal{P}}$ is $\arcosh(\frac{1}{4\epsilon^2})$-separated. 
\end{thm}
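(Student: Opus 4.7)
The plan is to reduce everything to Cauchy--Schwarz in the negative-definite subspace $u^\perp \subset \R^{1,\infty}$, thereby proving that $\dist(u,w) > \arcosh(1/(4\epsilon^2))$ for every $u \in H_v(\epsilon)$ and $w \in H_{v'}(\epsilon)$, whenever $v \neq v'$ lie in $\mathcal{P}$. Taking the infimum over $u,w$ will then yield the claimed separation.

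First I would show $f := v \cdot v' \geq 1$. Writing $v = a_0 \bfe_0 - \sum_j a_j \bfe_j$ and $v' = b_0 \bfe_0 - \sum_j b_j \bfe_j$ with $a_0, b_0 > 0$, the isotropy constraints read $\sum_j a_j^2 = a_0^2$ and $\sum_j b_j^2 = b_0^2$, so the ordinary Cauchy--Schwarz inequality $\big|\sum_j a_j b_j\big| \leq a_0 b_0$ yields $f \geq 0$. Equality would force $(a_j)$ and $(b_j)$ to be proportional, hence $v$ and $v'$ parallel; combined with primitivity and the sign condition $v \cdot \bfe_0 > 0$, this would give $v = v'$, contrary to assumption. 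Since $f \in \Z$, this gives $f \geq 1$.

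Next I would lower-bound $d := \dist(u,w)$. Set $a = v \cdot u \leq \epsilon$, $b = v' \cdot w \leq \epsilon$ and $d' = v' \cdot u$. The complement $u^\perp$ is negative-definite, and I equip it with the Euclidean norm $\|x\| := \sqrt{-x \cdot x}$. Writing $v = a u + v_1$, $v' = d' u + v'_1$, $w = (\cosh d)\,u + w_1$ with $v_1, v'_1, w_1 \in u^\perp$, the norm constraints $v^2 = v'^2 = 0$ and $w^2 = 1$ give $\|v_1\| = a$, $\|v'_1\| = d'$, $\|w_1\| = \sinh d$. Expanding $b = v' \cdot w = d' \cosh d + v'_1 \cdot w_1$ and bounding the cross term via Cauchy--Schwarz in $u^\perp$ yields $b \geq d'(\cosh d - \sinh d) = d' e^{-d}$, so $d' \leq \epsilon e^d$. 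The same expansion applied to $f = v \cdot v' = a d' + v_1 \cdot v'_1$ yields $f \leq 2 a d'$, hence $a d' \geq f/2 \geq 1/2$. Combining the two bounds gives $\epsilon^2 e^d \geq a d' \geq 1/2$, so $d \geq \log(1/(2\epsilon^2))$. A direct computation now shows $\cosh\!\big(\log(1/(2\epsilon^2))\big) = 1/(4\epsilon^2) + \epsilon^2 > 1/(4\epsilon^2)$, so $d > \arcosh(1/(4\epsilon^2))$ strictly.

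The main obstacle will be producing the factor $1/2$ in $a d' \geq f/2$: one has to observe that, in the orthogonal decomposition $v \cdot v' = a d' + v_1 \cdot v'_1$, the ``spatial'' cross term $v_1 \cdot v'_1$ can reach $-a d'$, forcing the ``timelike'' part $a d'$ to absorb at least half of $f$. Apart from this point the argument is just careful Cauchy--Schwarz bookkeeping and elementary manipulations of $\cosh$ and $\sinh$.
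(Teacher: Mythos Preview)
Your argument is correct and in fact recovers the same sharp bound $\cosh d\ge \tfrac{1}{4\epsilon^2}+\epsilon^2$ that the paper obtains, but by a genuinely different route. The paper never looks at an arbitrary pair $(u,w)$: it observes that the distance between the two horoballs is realised along the geodesic joining $v$ to $v'$, writes the two crossing points with the horospheres as $x=\alpha v+\alpha' v'$ and $y=\beta v+\beta' v'$, and solves the four scalar constraints $x\cdot x=y\cdot y=1$, $x\cdot v=y\cdot v'=\epsilon$ to obtain the closed formula $x\cdot y=\tfrac{m}{4\epsilon^2}+\tfrac{\epsilon^2}{m}$ with $m=v\cdot v'\ge 1$. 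This is a four-line computation because everything lives in the $2$-plane $\R v\oplus\R v'$. Your approach instead works in the full ambient space, projecting onto the negative-definite hyperplane $u^\perp$ and applying Cauchy--Schwarz twice (once to control $d'=v'\cdot u$ via $e^d$, once to control $v\cdot v'$ via $ad'$). It is a little longer, but it has the merit of bounding \emph{every} pair of points uniformly---so no separate convexity or minimiser argument is needed to pass to the infimum---and it makes transparent that the integrality of $v\cdot v'$ is the sole arithmetic input. One cosmetic remark: in your closing paragraph the relevant extreme for the cross term is $v_1\cdot v_1'=+ad'$ (Minkowski sign), which is what gives $f\le 2ad'$; the inequality you actually use is stated correctly.
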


\begin{rem}
The set of horoballs $\{H_v(\epsilon)\}_{v\in\mathcal{P}}$ is invariant by the group of isometries of $\Hyp_\infty$ preserving integral points. 
 \end{rem}

\begin{proof}
	Consider two different horoballs $H_{v}(\epsilon)$ and $H_{v'}(\epsilon)$ of $\mathcal{P}$. 
	Consider $x\in [v,v']\cap \partial H_{v}(\epsilon)$ and $y\in[v,v']\cap \partial H_{v'}(\epsilon) $, where $[v,v']$ is the geodesic from $v$ to $v'$.
	Set $m=v\cdot v'$ (this is a positive integer), and write $x = \alpha v + \alpha' v'$ and $y= \beta v + \beta ' v'$.
	The points $x$ and $y$ belong to $\Hyp_\infty$ so $x\cdot x = y\cdot y = 1$, and by definition of the horospheres  $x\cdot v = y\cdot v'=\epsilon$. 
	This gives
	\begin{align}
	& 2\alpha\alpha 'm = 2\beta\beta 'm =1,\\
	& \alpha ' m = \beta m =\epsilon.
	\end{align}
	Using these equalities we get:
	\begin{equation}
         x\cdot y= (\alpha\beta'+\alpha'\beta)m = \frac{m}{4\epsilon^2} +\frac{\epsilon^2}{m}.
         \end{equation}
	Since $m\geq 1$, we obtain:
	\begin{equation}
        x\cdot y > \frac{1}{4\epsilon ^2}. 
        \end{equation}
	For $4\epsilon^2 > 1$ the two horoballs overlap, for $4\epsilon^2 =1$ they are tangent at $x=y$, and for $4\epsilon^2 < 1$
	they are disjoint: The distance between $H_{v}(\epsilon)$ and $H_{v'}(\epsilon)$ is  the distance between $x$ and $y$ and is equal to
	\begin{equation}
	\dist_{\Hyp_\infty}\left(H_{v}(\epsilon),H_{v'}(\epsilon)\right)=\dist_{\Hyp_\infty}(x,y)\geq\arcosh(\frac{1}{4\epsilon^2}).
	\end{equation}
	This concludes the proof.
	\end{proof}
 

Let $\mathfrak{p}$ be a Halphen pencil.
Recall from section~Section \ref{par:P(X)} that  $\xi_{\mathfrak{p}}$ denotes the
anticanonical class of the surface obtained by blowing up the nine base-points of $\mathfrak{p}$. 
Since $\xi_{\mathfrak{p}}\in \ZZ$ is primitive and isotropic, and the Cremona group permutes the set of primitive, isotropic vectors, 
Theorem \ref{thm:disjonc} provides the following result.

\begin{cor}\label{prop:horo_R_separated}
Consider the set  of all horoballs of the form
\[
f(H_{\xi_{\mathfrak{p}}}(\epsilon))=H_{f(\xi_{\mathfrak{p}})}(\epsilon),
\]
where ${\mathfrak{p}}$ is any Halphen pencil and $f$ is any element of $\Cr_2(\bfk)$.
Then for $0< \epsilon < 1/\sqrt{2}$, this set of horoballs is $\arcosh(\frac{1}{4\epsilon^2})$-separated. 
\end{cor}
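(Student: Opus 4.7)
The plan is to reduce the statement to a direct application of Theorem~\ref{thm:disjonc}. Everything amounts to verifying that each vector of the form $f(\xi_{\mathfrak{p}})$, with $\mathfrak{p}$ a Halphen pencil and $f \in \Cr_2(\bfk)$, lies in the set $\mathcal{P}$ of integral, isotropic, primitive vectors $v \in \Z^{1,\infty}$ satisfying $v \cdot \bfe_0 > 0$.

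First, I would recall from Section~\ref{par:P(X)} that if $X$ is the Halphen surface obtained by blowing up the nine base-points of $\mathfrak{p}$, then the anticanonical class corresponds, under the natural isometric embedding $\Z^{1,9} \simeq \NS(X) \hookrightarrow \ZZ$, to $\xi_9 = 3\bfe_0 - \bfe_1 - \cdots - \bfe_9$. This is manifestly an integral isotropic vector with $\xi_{\mathfrak{p}} \cdot \bfe_0 = 3 > 0$, and it is primitive because its coefficients have gcd equal to $1$. Hence $\xi_{\mathfrak{p}} \in \mathcal{P}$ for every Halphen pencil $\mathfrak{p}$.

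Next, I would use that, by the Theorem of Subsection~\ref{par:ZZ}, $\Cr_2(\bfk)$ acts by linear isometries on the lattice $\ZZ \simeq \Z^{1,\infty}$. Hence $f(\xi_{\mathfrak{p}})$ is again an integral isotropic vector. Its primitivity is preserved because $f$ induces an automorphism of the lattice $\Z^{1,\infty}$: if $f(\xi_{\mathfrak{p}}) = k\,w$ for some $w \in \Z^{1,\infty}$ and integer $k \geq 2$, then $\xi_{\mathfrak{p}} = k\, f^{-1}(w)$ with $f^{-1}(w) \in \Z^{1,\infty}$, contradicting the primitivity of $\xi_{\mathfrak{p}}$. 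The sign condition $f(\xi_{\mathfrak{p}}) \cdot \bfe_0 > 0$ follows from the fact that the closed positive light-cone $\{v \in \R^{1,\infty} : v \cdot v \geq 0,\ v \cdot \bfe_0 > 0\}$ is invariant under the isometric action of $\Cr_2(\bfk)$, since that action preserves $\Hyp_\infty$. Thus $f(\xi_{\mathfrak{p}}) \in \mathcal{P}$.

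Finally, the equality $f(H_{\xi_{\mathfrak{p}}}(\epsilon)) = H_{f(\xi_{\mathfrak{p}})}(\epsilon)$ is the invariance recalled in the remark of Subsection~\ref{par:horoballs_cr}. Therefore every horoball listed in the statement belongs to the family $\{H_v(\epsilon)\}_{v \in \mathcal{P}}$, and Theorem~\ref{thm:disjonc} immediately yields the $\arcosh(1/(4\epsilon^2))$-separation for $0 < \epsilon < 1/\sqrt{2}$. There is no real obstacle; the only subtle point is the transfer of primitivity, which is handled by the $\Z$-linearity of the action on the lattice.
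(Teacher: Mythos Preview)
Your proposal is correct and follows exactly the paper's approach: the paper simply observes that $\xi_{\mathfrak{p}}$ is primitive and isotropic in $\ZZ$, that the Cremona group permutes such vectors, and then invokes Theorem~\ref{thm:disjonc}. Your write-up just spells out in more detail the verification that $\mathcal{P}$ is preserved (integrality, isotropy, primitivity, and the sign condition), which is implicit in the paper's one-line justification.
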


\section{Geometric group theory and conclusion}

To prove Theorem~B, we rely on the work of Fran\c{c}ois Dahmani, Vincent Guirardel and Denis Osin (see \cite{DGO}).

\subsection{Hyperbolic spaces and rotating families}\label{par:DGO}
First we recall one of the main results of \cite{DGO}.

Consider a geodesic metric space $X$ and a non-negative constant $\sigma$. A subset $Q$ of $X$ is {\bf $\sigma$-strongly quasiconvex} if for any two points $x,y$ of $Q$ there exist $x',y'$ in $Q$ and geodesics $[x',y']$, $[x',x]$ and $[y,y']$ included in $Q$ such that $\max\{\dist(x,x'),\dist(y,y')\}\leq \sigma$. 

Let $\delta$ be a non-negative constant. 
A geodesic metric space $X$ is   {\bf $\delta$-hyperbolic} if every triangle in $X$ is $\delta$-thin, 
meaning that each side of the triangle is included in the $\delta$-neighborhood of the union of the two remaining sides.
We shall say that $X$ is {\bf Gromov-hyperbolic} if it is $\delta$-hyperbolic for some $\delta >0$.


Let $G$ be a group acting by isometries on a $\delta$-hyperbolic space $X$, for some $\delta>0$. Consider a $G$-invariant subset $C\subset X$, which we call the set of {\bf apices}. For each $c\in C$, consider $G_c\subset \Stab(c)$ a subgroup of the stabilizer of $c$ in $G$, called the {\bf rotation subgroup}. 
Following \cite[Definition 5.1]{DGO}, we say that
the family $(C,\{G_c\}_{c\in C})$ is a {\bf very rotating family} if it satisfies the following conditions:
\begin{enumerate}
	\item for every $g\in G$, for every $c\in C$, $G_{gc}=gG_cg^{-1}$,
	\item for every $c\in C$, for every $g\in G_c\setminus\{1\}$, and for every $x$, $y\in X$ satisfying 
	\[
	\quad \quad \dist(x,c)\in [20\delta,40\delta], \, \; \dist(y,c)\in [20\delta,40\delta], \; \, {\text{and}} \; \dist(gx,y)\leq 15 \delta,
	\]
	any geodesic between $x$ and $y$ contains $c$. 
\end{enumerate}
Moreover, as in Section~\ref{par:disjonction}, the family is $\rho$-separated, for some $\rho>0$, if the distance between distinct apices is always strictly bigger than $\rho$. 

\begin{thm}[{\cite[Theorem 5.3]{DGO}}]\label{thm:normal_subgroup}
Let $G$ be a group acting by isometries on a $\delta$-hyperbolic geodesic space $X$. Let $(C,\{G_c\}_{c\in C})$ be a $\rho$-separated very rotating family, for some $\rho> 200\delta$. Then the normal subgroup $N_G$ of $G$ generated by the groups $\{G_c\}_{c\in C}$ is a free product of a (usually infinite) subfamily of $\{G_c\}_{c\in C }$. Moreover, for every $c\in C$, $N_G\cap \Stab(c)=G_c$.
\end{thm}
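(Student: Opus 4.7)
The plan is to prove this via a normal form argument combined with a quasi-geodesic lemma, in the spirit of small cancellation theory. The idea is to associate to each word in the $G_c$'s a canonical path in $X$, then use the very rotating condition to show that such paths are quasi-geodesic, from which both the free product structure and the stabilizer assertion follow.

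First, since the family $\{G_c\}$ is permuted by $G$-conjugation (axiom~(1)), every element of $N_G$ is a product of elements taken from $\bigcup_c G_c$. Fix a $G$-transversal $T \subset C$. I would show that any nontrivial $g \in N_G$ admits a reduced expression $g = (f_1 g_1 f_1^{-1}) \cdots (f_n g_n f_n^{-1})$ with $g_i \in G_{t_i}\setminus\{1\}$, $t_i \in T$, and with the apices $c_i := f_i(t_i)$ satisfying $c_i \neq c_{i+1}$. To such an expression I would associate a geometric path $\gamma$ in $X$: choose for each $k$ a point $p_k$ in the annulus $[20\delta, 40\delta]$ around $c_k$ such that both $p_k$ and $g_k p_k$ lie on the geodesic chords entering and leaving the apex neighborhood, then concatenate the geodesic segments
\[
[x_0, p_1] \cdot [p_1, g_1 p_1] \cdot [g_1 p_1,\, g_1 p_2] \cdot \ldots \cdot [h_{n-1} p_n,\, h_{n-1} g_n p_n],
\]
where $h_k = g_1 \cdots g_k$.

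The core lemma is that $\gamma$ is a quasi-geodesic with constants depending only on $\delta$. Two geometric facts drive this: the \emph{bridge} segments between consecutive apex neighborhoods have length at least $\rho - 80\delta > 120\delta$ by the separation hypothesis $\rho > 200\delta$; and axiom~(2) forces every geodesic from $p_k$ to $g_k p_k$ to pass through $c_k$, producing a sharp turn of definite size at each apex. A standard local-to-global quasi-geodesic theorem for $\delta$-hyperbolic spaces (as in Bridson--Haefliger, Chapter III.H) would then upgrade these local features into a global quasi-geodesic statement. The threshold $\rho > 200\delta$ and the scales $20\delta$, $40\delta$, $15\delta$ in the definition of a very rotating family are exactly calibrated to make this work. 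Given the lemma, a nontrivial reduced word cannot equal the identity, since $\gamma$ would otherwise be a quasi-geodesic loop; this simultaneously yields the free product decomposition indexed by a subfamily of $\{G_c\}_{c \in C}$ and, applied with the basepoint taken near $c$, the identity $N_G \cap \Stab(c) = G_c$ (a reduced expression for $g \in N_G \cap \Stab(c)$ would, by the same path argument, be forced to start and end at the apex $c$, and then reducedness together with axiom~(2) collapses it to a single syllable).

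The main obstacle is the quasi-geodesic lemma, which is the technical heart of the argument. The tension is that each apex introduces a genuine detour, and these detours must not accumulate so as to destroy large-scale geodesicity. The careful numerology of the axioms is precisely what allows a local geodesic argument in a ball of radius $\approx 40\delta$ around each apex to be propagated globally, using the separation $\rho > 200\delta$ as a guarantee of ample room between successive detour regions. Once this geometric lemma is in hand, the free product structure and the stabilizer identity are formal consequences of the normal form, parallel to the classical arguments in Bass--Serre theory or ordinary small cancellation.
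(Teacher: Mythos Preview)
The paper does not prove this theorem: it is quoted verbatim from \cite[Theorem~5.3]{DGO} and used as a black box, so there is no ``paper's own proof'' to compare against. Your task here was only to recognize this and defer to the reference.

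That said, your sketch is broadly in the right spirit---reduced words, associated broken geodesics, and a local-to-global quasi-geodesic argument---and such an approach can be made to work. But it is worth noting that the actual proof in \cite{DGO} is organized differently: it proceeds via an inductive \emph{windmill} construction together with a Greendlinger-type shortening lemma, rather than a single global quasi-geodesic statement. Their argument builds the free product structure one apex at a time, showing at each stage that the subgroup generated so far intersects each rotation group $G_c$ exactly in the expected way; this is what makes the delicate assertion $N_G\cap\Stab(c)=G_c$ come out cleanly. Your final paragraph on this point (``forced to start and end at the apex $c$, and then reducedness\ldots collapses it to a single syllable'') is the step that would need the most work to make rigorous: one has to rule out that a long reduced word in many apices could nonetheless fix $c$, and the quasi-geodesic lemma alone does not immediately give that without further argument about where the endpoints of the path lie relative to $c$.
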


\begin{rem}
If $G_c$ is a proper subgroup of $\Stab(c)$, or if $G_c$ is abelian and $G$ has no 
non-trivial homomorphism onto an abelian group, then the normal subgroup generated by the groups $\{G_c\}_{c\in C}$
is a proper subgroup of $G$. In particular, if $f\in G_c\setminus\{\id\}$, the smallest normal subgroup of $G$ containing $f$ 
 is proper and non-trivial. See {\cite{DGO}} for other consequences.
\end{rem}

\subsection{Cone-off construction}

In our case, we  shall apply Theorem~\ref{thm:normal_subgroup} to $G=\Cr_2(\bfk)$, and the role of $G_c$ will be played by the subgroup $P(\mathfrak{p})$ of $\Bir(\P^2_\bfk, \mathfrak{p})$.
This group fixes a point $\xi_{\mathfrak{p}}\in \partial\Hyp_\infty$, but has no fixed point inside $\Hyp_\infty$. We shall modify $\Hyp_\infty$ 
to bring this fixed point inside the space: this is done in Section~\ref{subsec:cone-off}. Then, we need to modify this cone-off, because if we go deeply inside
the horoballs the very rotating property (2) above will fail. For that, we dig a hole in the cones, and this is explained in Section~\ref{subsec:parabolic-cone-off}.
For more details on this cone-off constructions, see \cite[Sections 5.3 and 7.1]{DGO} or \cite[Section 3]{CoAsp}. 

\subsubsection{Hyperbolic cones (see~\cite{BH})}
Consider  a metric space $Y$ and a real number $r_0>0$. The {\bf hyperbolic cone} over $Y$ of height $r_0$, denoted by $\Cone(Y,r_0)$, is the quotient of $Y\times [0,r_0]$ by the equivalence relation: 
\begin{equation}
(y_1,r_1) \sim (y_2,r_2) \; \text{ if and only if } \; r_1=r_2=0 \; \text{ or }  \; (y_1,r_1)=(y_2,r_2). 
\end{equation}
 By definition, the point $(y,0)$ is the {\bf apex} of $\Cone(Y,r_0)$. The metric on this cone is defined by the formula
\[ \dist_{\Cone(Y,r_0)}\left( (y,r),(y',r')\right)=\arcosh \left( \cosh r\cosh r'- \sinh r\sinh r' \cos \theta(y,y')  \right)    ,  \]
where $\theta(y,y')=\min \left( \pi,\frac{\dist_{Y}(y,y')}{\sinh r_0}\right)$. 
By \cite[Chap. I.5 Prop 5.10]{BH}, $\Cone(Y,r_0)$ is a geodesic metric space if and only if $Y$ is geodesic.

\subsubsection{Cone-off}\label{subsec:cone-off}

Consider a metric space $X$, a family $Y=\{Y_i\}_i$  of subsets of $X$, and $r_0$ a positive constant. 
The {\bf cone-off} of  $Y$ over $X$, denoted by $C(X,Y,r_0)$, is the space obtained from the disjoint union of $X$ and of the cones $\Cone(Y_i,r_0)$, for all $i$, by gluing each subspace $Y_i$ in $X$ to $Y_i\times \{r_0\}$ in $\Cone(Y_i,r_0)$ via the identification 
\begin{equation}
y\in Y_i\mapsto (y,r_0)\in \Cone(Y_i,r_0).
\end{equation}
By \cite[Proposition 2.1.5]{CoAsp}, for $(y,r_0)$ and $(y',r_0)\in \Cone(Y_i,r_0)$ we have
\begin{equation}
\dist_{\Cone(Y_i,r_0)}\big((y,r_0),(y',r_0)\big) \leq\dist_X(y,y') .  
\end{equation}
 Let us construct a metric on $C(X,Y,r_0)$.
Given $x$ and $x'$ in $C(X,Y,r_0)$, set
\[ \lVert x-x' \rVert =\begin{cases}
\dist_X(x,x') & \text{ if }x,x'\in X \text{ and there is no } i \text{ such that }x,x'\in Y_i\\
\dist_{\Cone(Y_i,r_0)}(x,x') & \text{ if there exists }Y_i\text{ such that }x,x'\in \Cone(Y_i,r_0)\\
+\infty &\text{ otherwise.}
\end{cases} \]
Then, consider a chain $C$ between $x$ and $x'$, i.e. a finite sequence of points $x=z_1$, $z_2$, $\dots$, $z_n=x'$ in $C(X,Y,r_0)$. Define its length to be
$\ell(C)=\sum_{i=1}^{n-1} \lVert z_{i+1}-z_i\rVert. $
By \cite[Proposition 3.1.7]{CoAsp}, a distance is defined on the cone-off as follows:
\begin{equation}
\dist_{C(X,Y,r_0)}=\inf \{\ell(C)\mid C \text{ is a chain between }x \text{ and } x'  \}.
\end{equation}


The following theorem concerns metric graphs with isometric edges (edges of the same length); 
this ensures that the cone-off is geodesic (\cite[Theorem I.7.19]{BH}). 
As in \cite{DGO}, page 77, set
\begin{equation}\label{eq:deltaU-rU}
\delta_U=900, \; {\text{and}}\;  r_U= 5\times 10^{12}.
\end{equation}

\begin{thm}\label{thm:hyperbolic_cone-off}
Given any $r_0\geq r_U$, there exists $\delta_c>0$ such that :
if $X$ is a $\delta_c$-hyperbolic metric graph with pairwise isometric edges, and if $\mathcal{Q}$ is a $40\delta_c$-separated family of $10\delta_c$-strongly quasiconvex subsets of $X$, then the cone-off $C(X,\mathcal{Q},r_0)$ is globally $\delta_U$-hyperbolic. 
\end{thm}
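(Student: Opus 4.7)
The approach is to adapt, essentially verbatim, the cone-off hyperbolicity results of \cite[\S5.3]{DGO} (and the more flexible reformulation in \cite[\S3]{CoAsp}). The statement is quantitative: the target constant $\delta_U=900$ and the depth $r_U=5\times 10^{12}$ are absolute, and the role of $\delta_c$ is to be small enough (depending only on $\delta_U$ and $r_0$) that every error term accumulated in the estimates stays below $\delta_U$. So the plan is: fix $r_0\geq r_U$, then choose $\delta_c$ at the very end of the argument.

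First, I would check that $C(X,\mathcal{Q},r_0)$ is a genuine geodesic space. Since $X$ is a graph whose edges are isometric, between any two points of $X$ there is a geodesic, and each subset $Y_i\in\mathcal{Q}$ is itself a length space for the induced path metric. By \cite[I.5, Prop.~5.10]{BH}, each cone $\Cone(Y_i,r_0)$ is then geodesic. The chain-infimum definition of $\dist_{C(X,\mathcal{Q},r_0)}$ and the uniform edge length ensure that the infimum is attained; in particular $C(X,\mathcal{Q},r_0)$ is geodesic (see \cite[Thm.~I.7.19]{BH} for the background).

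Next, I would establish $\delta_U$-thin triangles by a case analysis. Any geodesic $\gamma$ in $C(X,\mathcal{Q},r_0)$ decomposes into a sequence of arcs alternately contained in $X$ and in some cone $\Cone(Y_i,r_0)$, with transitions occurring on the gluing circle $Y_i\times\{r_0\}$. The three key local facts I would use are:
\begin{enumerate}
\item inside a cone of height $\geq r_U$ over a $10\delta_c$-quasiconvex base, any geodesic whose endpoints lie at depth $\geq r_U/2$ from the base passes through (a bounded neighborhood of) the apex, and triangles supported in a single cone are uniformly thin with constant a universal function of $r_0$ alone;
\item between two consecutive cone-excursions of a single geodesic, the arc in $X$ has length at least the cone-to-cone separation, which is $>40\delta_c$; in particular different cones contribute independently;
\item arcs that remain in $X$ enjoy $\delta_c$-thin comparison because $X$ is $\delta_c$-hyperbolic and the $Y_i$ are $10\delta_c$-strongly quasiconvex, so projections onto them are $1$-Lipschitz up to additive $O(\delta_c)$ error.
\end{enumerate}
Combining these on a geodesic triangle $[x_1,x_2,x_3]$ in $C(X,\mathcal{Q},r_0)$ bounds the distance from a point on one side to the other two sides by $F(r_0)+C\delta_c$, where $F$ is a universal function and $C$ a universal constant. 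Taking $\delta_c$ small enough—depending only on $r_0$ and $\delta_U$—makes this sum bounded by $\delta_U=900$.

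The hard part will be point (1), i.e. the fine geometry of a hyperbolic cone over a quasiconvex base and the control of geodesics that rebound at the gluing $Y_i\times\{r_0\}$ between the cone side and the ambient side. This is exactly the content of the cone-off estimates of \cite[\S5.3]{DGO} and \cite[\S3]{CoAsp}, where the hypotheses ``$40\delta_c$-separated'' and ``$10\delta_c$-strongly quasiconvex'' are tailored. I would invoke those estimates rather than redo them, using them as black boxes; the remainder of the argument is then the triangle case analysis above plus the final tuning $\delta_c=\delta_c(r_0)$.
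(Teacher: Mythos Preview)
Your plan is correct in spirit and would eventually land on the same black box, but it is considerably more elaborate than what the paper does. The paper's proof is a two-line citation: it observes that the $40\delta_c$-separation hypothesis forces the \emph{fellow traveling constant} $\Delta(\mathcal{Q})$ (in the sense of \cite{DGO}) to vanish, and then Corollary~5.39 of \cite{DGO} applies directly to give the conclusion. No triangle case analysis, no rebound estimates, no tuning of $\delta_c$ by hand---all of that is already packaged inside the cited corollary.

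Your sketch essentially reconstructs the architecture of the proof of that corollary (decomposition of geodesics into cone-excursions and $X$-arcs, thinness inside a single cone, separation to decouple cones, final choice of $\delta_c$). This is fine as an explanation of \emph{why} the result holds, and your final sentence acknowledges you would invoke \cite{DGO,CoAsp} as black boxes anyway. What you are missing is the one concrete hypothesis-check that makes the citation go through cleanly: identifying that $40\delta_c$-separation gives $\Delta(\mathcal{Q})=0$, so that the quantitative hypotheses of \cite[Theorem~5.38, Corollary~5.39]{DGO} are met on the nose. Once you say that, the rest of your outline is superfluous for the purposes of this paper.
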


\begin{proof}
This statement follows from Corollary 5.39 of \cite{DGO}. Indeed, with the notation from \cite{DGO}, 
 the fellow traveling constant of the family  $\mathcal{Q}$ satisfies $\Delta(\mathcal{Q})=0$ because 
 $\mathcal{Q}$ is $40\delta_c$-separated. Hence, all hypotheses of Theorem~5.38 of \cite{DGO} are satisfied.
\end{proof}

\subsubsection{Parabolic cone-off (see~\cite[\S 7.1]{DGO})}\label{subsec:parabolic-cone-off}
For any $Q\in\mathcal{Q}$, we consider  the hyperbolic cone $\Cone(Q,r_0)$. For any $p$, $q\in \partial Q$ with 
$\dist_X(p,q)\leq \pi \sinh(r_0)$,
and for any geodesic $[p,q]$ in $\Cone(Q,r_0)$, we consider the filled triangle $T_{[p,q]}\subset \Cone(Q,r_0)$ bounded by the geodesics $[p,q]$, $[p,c_Q]$ and $[q,c_Q]$; this triangle is a cone over $[p,q]$.
For any $Q\in \mathcal{Q}$, we define 
\begin{equation}
B_{Q}={\bigcup} T_{[p,q]}
\end{equation}
where the union runs over all geodesics $[p,q]$ in $\Cone(Q,r_0)$ of length $\leq \pi \sinh(r_0)$ and with endpoints in $\partial Q$. 
\begin{defi}
Consider $X$ a Gromov-hyperbolic space and $\mathcal{Q}$ a $R$-separated family of convex subsets. The {\bf parabolic cone-off} $C'(X,\mathcal{Q},r_0)$ is the subset of $C(X,\mathcal{Q},r_0)$ given by:\[  \left(C(X,\mathcal{Q},r_0)\setminus \underset{Q\in\mathcal{Q}}{\bigcup}\Cone(Q,r_0) \right) \cup \left(\underset{Q\in\mathcal{Q}}{\bigcup} B_Q\right) .\]
It is endowed  with the induced path metric. 
\end{defi}

In the following lemma, $r_0\geq r_U$ and $\delta_c$ is given by Theorem \ref{thm:hyperbolic_cone-off}. For the definition of horoballs in a Gromov-hyperbolic graph see \cite[Section 7.1 p.111]{DGO}.

 \begin{lem}{\cite[Lemma 7.4]{DGO}}\label{lem:hyperbolicite-parabolic-coneoff}
Let $X$ be a $\delta_c$-hyperbolic graph with isometric 
edges and $\mathcal{H}$ a $40\delta_c$-separated system of horoballs. Then the parabolic cone-off $C'(X,\mathcal{Q},r_0)$ is $16\delta_U$-hyperbolic. 
 \end{lem}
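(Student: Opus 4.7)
The plan is to reduce the statement to Theorem \ref{thm:hyperbolic_cone-off}, which already establishes $\delta_U$-hyperbolicity of the \emph{full} cone-off, and then to control the effect of replacing each hyperbolic cone $\Cone(Q,r_0)$ by the smaller piece $B_Q$. Here I would of course use the identification (with the obvious slight abuse of notation) $\mathcal{Q} = \mathcal{H}$, i.e.\ the horoballs of the system $\mathcal H$ play the role of the strongly quasiconvex family.

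\textbf{Step 1 (quasiconvexity of horoballs).} I would begin by verifying that in a $\delta_c$-hyperbolic graph, horoballs are $10\delta_c$-strongly quasiconvex. This is a standard fact: horoballs are sublevel sets of Busemann functions, and on any $\delta$-hyperbolic space such functions are $1$-Lipschitz and quasi-convex with constant depending only on $\delta$. Combined with the hypothesis that $\mathcal{H}$ is $40\delta_c$-separated, this places us exactly in the situation of Theorem \ref{thm:hyperbolic_cone-off}, which yields that the full cone-off $C(X,\mathcal{H},r_0)$ is $\delta_U$-hyperbolic.

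\textbf{Step 2 (geometry of the caps $B_Q$).} I would next analyze the sets $B_Q \subset \Cone(Q,r_0)$ and show that $B_Q$ is quasiconvex, with uniform constant, inside $C(X,\mathcal{H},r_0)$. The idea is that any two points of $B_Q$ can be joined by a path going through the apex $c_Q$, and the length of this path is comparable to the distance between the two points in $\Cone(Q,r_0)$, thanks to the explicit metric formula on hyperbolic cones. The crucial quantitative input is the threshold $\pi\sinh(r_0)$: it ensures that whenever the two boundary points $p,q \in \partial Q$ are close enough in $X$, the triangle $T_{[p,q]}$ lies inside $B_Q$, so every geodesic of $\Cone(Q,r_0)$ between them is realized inside $B_Q$. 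For pairs farther apart, the cone distance is realized by the broken path through $c_Q$, which again lies in $B_Q$.

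\textbf{Step 3 (inclusion $C' \hookrightarrow C$ is a quasi-isometric, quasiconvex embedding).} From the previous step, any path in $C(X,\mathcal{H},r_0)$ that enters a cone $\Cone(Q,r_0)$ can be replaced by a path of controlled extra length that stays inside $B_Q$ (either following the geodesic through $c_Q$, or staying in a triangle $T_{[p,q]}$). Hence the path metric on $C'(X,\mathcal{H},r_0)$ agrees, up to a controlled multiplicative and additive constant, with the restriction of the metric on $C(X,\mathcal{H},r_0)$, and the image of $C'$ is quasiconvex in $C$.

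\textbf{Step 4 (conclusion).} A quasiconvex subspace with induced path metric of a $\delta$-hyperbolic geodesic space is again hyperbolic, with constant bounded by a universal linear function of $\delta$ and of the quasiconvexity constant. Inserting the bounds from Steps 1--3 produces the advertised estimate $16\delta_U$.

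\textbf{Main obstacle.} The delicate point is Step 2--3: one must check that when a geodesic triangle of $C'$ has vertices lying on three different caps $B_{Q_1}, B_{Q_2}, B_{Q_3}$, the ``detours'' forced by the removal of the deep cone interiors do not accumulate, and that thin triangles in $C$ project to triangles in $C'$ that are still thin up to a multiplicative loss of at most $16$. The $40\delta_c$-separation of the horoballs, propagated through the cone-off construction, is exactly what prevents two distinct caps from interacting, and keeps the loss in the hyperbolicity constant bounded by a universal factor.
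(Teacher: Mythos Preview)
The paper does not supply its own proof of this lemma: it is quoted verbatim as \cite[Lemma 7.4]{DGO} and used as a black box. So there is no argument in the paper to compare your proposal against.

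That said, your outline is a reasonable sketch of the strategy one finds in \cite{DGO}: apply Theorem~\ref{thm:hyperbolic_cone-off} to get hyperbolicity of the full cone-off $C(X,\mathcal H,r_0)$, then argue that passing to the subspace $C'(X,\mathcal H,r_0)$ with the induced path metric only worsens the hyperbolicity constant by a bounded factor. Your Steps~2--3 identify the right mechanism (any geodesic of the full cone-off that enters a cone can be pushed into the corresponding $B_Q$ with uniformly bounded distortion, using the threshold $\pi\sinh(r_0)$), and Step~4 invokes the standard fact that quasiconvex subspaces of hyperbolic spaces are hyperbolic. Two caveats: first, your sketch does not actually justify the specific constant $16\delta_U$, which in \cite{DGO} comes from explicit bookkeeping rather than a soft quasiconvexity argument; second, your ``main obstacle'' paragraph correctly flags the only delicate point (controlling detours across several caps) but does not resolve it, so as written this is an outline rather than a proof. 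For the purposes of this paper, however, citing \cite{DGO} is all that is required.
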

 
The following Lemma is proved in \cite{DGO}. Although the statement is given in the context of relatively hyperbolic group,
its proof just needs a separated set of horoballs.

 \begin{lem}{\cite[Lemma 7.5]{DGO}}\label{lem:very_rotating}
Let $G$ be a group acting isometrically on a $\delta_c$-hyperbolic graph $X$ with isometric edges.
 Let $H_i\subset X$, $i\in I$, be a family of horoballs indexed by a set $I$. For each index $i$, 
 let $G(H_i)\subset G$ be the stabilizer of $H_i$ and $N_i\triangleleft G(H)$ be a normal subgroup of $G(H_i)$.
Assume that 
 \begin{itemize}
 \item[(i)] the $H_i$ are in pairwise disjoint orbits under the action of $G$;
 \item[(ii)] the set of distinct horoballs  \[\mathcal{H}=\{ g(H_i)\; \vert \; i\in I, \; g\in G/G(H_i)\}\] is $40\delta_c$-separated.
 \item[(iii)] $\dist_X(x,gx) \geq 4\pi \sinh(r_0)$
for each $g \in N_i\setminus\{1\}$ and each $x\in\partial H_i$.  
 \end{itemize}
Then the family of all $G$-conjugates of the $N_i$ defines a $2r_0$-separated very rotating family on $C'(X,\mathcal{H},r_0)$. 
 \end{lem}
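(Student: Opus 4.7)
The plan is to verify the two axioms of a very rotating family (Definition~5.1 of \cite{DGO}) for the system whose apex set $\mathcal{A}$ consists of one cone point $c_H$ per horoball $H \in \mathcal{H}$, with rotation group $gN_ig^{-1}$ at $c_{gH_i}$. Hypothesis (i) makes this well-defined: if $gH_i = g'H_{i'}$ then $i = i'$ and $g^{-1}g' \in G(H_i)$ normalizes $N_i$. The $G$-action on $X$ extends equivariantly to $C'(X,\mathcal{H},r_0)$ by permuting cones, so the equivariance axiom $G_{gc} = gG_cg^{-1}$ is automatic. For the $2r_0$-separation of $\mathcal{A}$, observe that any rectifiable path connecting distinct apices $c_H \neq c_{H'}$ must exit the first cone through its base $\partial H \subset X$, cross some distance in $X$, and enter the second cone through $\partial H'$; the two radial portions contribute exactly $2r_0$, and the intermediate portion in $X$ has length $\geq \dist_X(\partial H, \partial H') \geq 40\delta_c > 0$ by (ii), so $\dist_{C'}(c_H, c_{H'}) > 2r_0$.

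The substantive work is in the rotating axiom (2). Fix $c = c_{H_i}$, pick $g \in N_i \setminus \{1\}$, and consider $x, y \in C'(X, \mathcal{H}, r_0)$ with $\dist(x,c), \dist(y,c) \in [20\delta_U, 40\delta_U]$ and $\dist(gx, y) \leq 15\delta_U$. Since $40\delta_U \ll r_0$, the points $x$ and $y$ lie strictly inside the cone over $H_i$ and admit cone coordinates $x = (p, r)$, $y = (q, s)$ with $p, q \in \partial H_i$ and $r, s \in [20\delta_U, 40\delta_U]$; correspondingly $gx = (gp, r)$. By (iii), $\dist_X(p, gp) \geq 4\pi\sinh(r_0)$, so the cone angle $\theta(p, gp)$ attains its maximum $\pi$, and the geodesic from $x$ to $gx$ in the cone is unique and passes through $c$ with length $2r$. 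A direct computation with the cone-metric formula converts $\dist(gx, y) \leq 15\delta_U$ into a small-angle estimate on $\theta(gp, q)$, which in turn gives $\dist_X(gp, q) \leq \pi\sinh(r_0)/2$, say; combining with $\dist_X(p, gp) \geq 4\pi \sinh(r_0)$ via the triangle inequality in $X$, one gets $\dist_X(p, q) \geq \pi\sinh(r_0)$, whence $\theta(p, q) = \pi$. Thus every geodesic from $x$ to $y$ that remains in the cone passes through $c$; and a geodesic attempting to leave the cone would have length at least $2(r_0 - 40\delta_U) \gg 80\delta_U \geq \dist(x,y)$, which is impossible.

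The main obstacle is precisely the quantitative cone-geometry step above: converting the coarse metric bound $\dist(gx, y) \leq 15\delta_U$ into a sharp angular bound on $\theta(gp, q)$, and then controlling geodesics in the parabolic cone-off, where the hole cut out of each cone complicates matters. This depends on the careful calibration $\delta_U = 900$, $r_0 \geq r_U = 5 \times 10^{12}$ of Equation~\eqref{eq:deltaU-rU}, chosen so that radii of order $\delta_U$ around $c$ are negligible relative to $\sinh(r_0)$. The detailed argument is the content of Lemma~7.5 of \cite{DGO}; verifying that hypotheses (i)--(iii) suffice as input for their proof is what the sketch above is meant to render transparent.
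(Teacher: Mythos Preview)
The paper does not give a proof of this lemma at all: it simply cites Lemma~7.5 of \cite{DGO}, remarking only that the argument there (written for relatively hyperbolic groups) goes through verbatim once one has a separated system of horoballs. Your sketch is a faithful outline of that argument---well-definedness of the rotation groups from~(i), the $2r_0$-separation of apices from the radial cost plus~(ii), and the rotating axiom from the cone-angle computation driven by~(iii)---and you correctly flag the one genuine subtlety (geodesics in the \emph{parabolic} cone-off, where part of each cone has been excised) before deferring it to \cite{DGO}, which is precisely what the paper itself does.
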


\subsection{Conclusion}\label{par:conclusion}
We can now prove a strong form of Theorem B.

\begin{thm-BB}
There exists a positive integer $n_0$ with the following property. Let $\bfk$ be a field. 
Consider the collection of all iterates $g^{n_0}$ of all  Halphen twists $g\in \Cr_2(\bfk)$. 
Then the normal subgroup generated by this collection is a non-trivial proper subgroup of $\Cr_2(\bfk)$.

Moreover, this normal subgroup is a free product of free abelian groups of rank $\leq 8$.
\end{thm-BB}

\begin{proof}[Proof of Theorem B'] 	
First, assume that the field $\bfk$ is algebraically closed. 

	Consider the action of $ \Cr_2(\bfk)$ on $\Hyp_\infty$.  
		In order to apply the results of \cite{DGO}, which are stated in the case of  Gromov-hyperbolic graphs, we need  the  following classical construction. Let $\mathcal{G}$ be the metric graph defined as follows. The vertices of $\mathcal{G}$ are the points of $\Hyp_\infty$ and we put an edge between two vertices $x$ and $y$ if their distance in $\Hyp_\infty$ is at most~$1$. This graph is endowed with the path metric for which each edge is isometric to the interval $[0,1]$. The Cremona group acts on $\mathcal{G}$ by isometries. We denote by $i : \Hyp_\infty  \hookrightarrow \mathcal{G}$ the inclusion. 
		It is a $(1,1)$-quasi-isometry:
\begin{equation} \dist_{\mathcal{G}}(i(x),i(y))-1\leq \dist_{\Hyp_\infty}(x,y)\leq \dist_{\mathcal{G}}(i(x),i(y))
\end{equation}
for any $x,y\in \Hyp_\infty$. Then $\mathcal{G}$ is $\delta_{\mathcal{G}}$-hyperbolic with $\delta_{\mathcal{G}}=\ln(1+\sqrt{2})+1$.
	
Fix $\frac{1}{\sqrt{2}}>\epsilon>0$ such that $\arcosh(\frac{1}{4\epsilon^2})\geq 40\delta_{\mathcal{G}}$. 
For every Halphen pencil $\mathfrak{p}$, consider the horoball $H_{\mathfrak{p}}(\epsilon)$ in $\Hyp_\infty$. By Corollary \ref{prop:horo_R_separated}, the family of translates of all the $H_{\mathfrak{p}}(\epsilon)$ under $\Cr_2(\bfk)$ is $R_\eps$-separated for $R_\eps=\arcosh(\frac{1}{4\epsilon^2})\geq40\delta_{\mathcal{G}}$. To each horoball $H_{\mathfrak{p}}(\epsilon)$ of $\Hyp_\infty$, we associate the subgraph $H_{\mathfrak{p}}$ of $\mathcal{G}$ spanned by the vertices $\{i(x)\mid x\in H_{\mathfrak{p}}(\epsilon)\}$. 
This is a $0$-strongly quasiconvex subgraph of $\mathcal{G}$ and the family of all translates $gH_{\mathfrak{p}}$ (for $g$ in $\Cr_2(\bfk)$ and 
${\mathfrak{p}}$ a Halphen pencil) is again $R_\eps$-separated.
	
Set $r_0= r_U$. With $\lambda = \delta_c/\delta_{\mathcal{G}}$, the scaled graph $\lambda\mathcal{G}$ (in which each edge has length $\lambda$) 
is $\delta_c$-hyperbolic. 
We call it $\mathcal{G'}$, and we denote by $H'_\mathfrak{p}\subset \calg'$ and $\partial H'_\mathfrak{p}\subset\calg'$ the subsets of 
$\calg'$ corresponding to $i(H_\mathfrak{p}),i(\partial H_\mathfrak{p})\subset \calg$. Now the 
set $\mathcal{H'}$ of translates of $H'_{\mathfrak{p}}$ under $\Cr_2(\bfk)$, for all Halphen pencils invariant by some Halphen twists,
is $\lambda R_\eps$-separated; the choice for  $\epsilon$ implies that it is $40\delta_c$-separated.
By Lemma \ref{lem:hyperbolicite-parabolic-coneoff}, the parabolic cone-off $C'(\mathcal{G'},\mathcal{H'},r_0)$   
is $16\delta_U$-hyperbolic.

Let $\Bir(\P^2_\bfk;\mathfrak{p})$ be the stabilizer of $\mathfrak{p}$ in $\Cr_2(\bfk)$,
and let $P(\mathfrak{p})\triangleleft P$ be the normal free abelian subgroup given by Corollary \ref{cor:P}. 
Using Lemma \ref{lem:PX}, we find an integer
$n$, that does not depend on ${\mathfrak{p}}$, such that 
\begin{equation}
\dist_{\mathcal{G'}}(x,g^{n} x)\geq 4\pi\sinh(r_0)
\end{equation}
for any $g\in P(\mathfrak{p}) \setminus \{\id\}$ and for any $x\in \partial H_{\mathfrak{p}}\an{'}(\epsilon)$.
Then, we set $N_{\mathfrak{p}}=\{g^n | \; g \in P(\mathfrak{p})\}$. This is a subgroup because $P(\mathfrak{p})$ is abelian, and it is clearly normal in $\Bir(\P^2_\bfk;\mathfrak{p})$. Moreover, the Corollary \ref{cor:P} shows that there is a uniform integer
\begin{equation}
n_0=n\times \left( 8^{2B_H} \times (B_H+1)! \right)!
\end{equation} 
 such that $g^{n_0}$
is in some $N_{\mathfrak{p}}$ for every Halphen twist $g$.

The family of horoballs $H_{\mathfrak{p}}'$ and normal subgroups $N_{\mathfrak{p}}\subset \Bir(\P^2_\bfk;\mathfrak{p})$ 
satisfies  the hypotheses of Lemma \ref{lem:very_rotating}, so the family of $\Cr_2(\bfk)$-conjugates of the $N_{\mathfrak{p}}$ 
defines a $2r_0$-separated very rotating family on $C'(\mathcal{G'},\mathcal{H'},r_0)$. By hypothesis on $r_0=r_U=10^6\delta_U$, $2r_0>200\times (16\delta_U)$
(see Equation~\eqref{eq:deltaU-rU}). Applying Theorem \ref{thm:normal_subgroup}, we get the expected result. 

If the field $\bfk$ is not algebraically closed, the first step is to replace it by an algebraic closure $\bfk'$, so as to apply Theorem~\ref{thm:normal_subgroup}. 
The normal subgroup $F$ of $\Cr_2(\bfk')$ generated by the iterates $g^{n_0}$ for all Halphen twists $g\in \Cr_2(\bfk)$ is a free product of abelian groups. 
Since $\PGL_3(\bfk)$ does not embed into such a group, the intersection $F\cap \Cr_2(\bfk)$ is a proper normal subgroup of $\Cr_2(\bfk)$.

To conclude, we need to show that $\Cr_2(\bfk)$ always contains Halphen twists: this is done in the examples below. 
\end{proof}

\subsection{Three examples}\label{par:3examples}

\subsubsection{Halphen twists} Let $\bfk$ be a field of characteristic $p\neq 2$. 
 Let $C\subset \P^2_\bfk$ be a smooth cubic curve defined over $\bfk$, together
 with a point $q\in C(\bfk)$. By definition, the Jonqui\`eres involution $s_q$ is a birational involution of the plane that 
 preserves the pencil of lines through the point $q$ and fixes $C$ pointwise. If $L$ is a line containing $q$, 
 it intersects $C$ on a pair of points (except if the line is tangent to $C$) and the restriction $s_{q\vert L}$ 
 is the unique involution fixing these two points (this involution does not exist if $\fieldchar(\bfk)=2$).
 This involution is defined over $\bfk$.
 If one blows up the point $q$ and the  points of tangency of the $4$ lines containing $q$ which are tangent to $C$, 
 the involution $s_q$ lifts to a regular involution of a rational surface $X_q\to \P^2_\bfk$. 
 
 Now, if we start with
 two points $q$ and $q'$ in $C(\bfk)$, we can lift $s_q$ and $s_{q'}$ simultaneously as a pair of automorphisms
 $s_Y$ and $s'_Y$ 
 of a rational surface $Y$ ($Y$ is the blow-up of the plane in $10$ points that dominates $X_q$ and $X_{q'}$). 
 J\'er\'emy Blanc proves in~\cite{Blanc} that
 \begin{itemize}
 \item there are no relations between $s_Y$ and $s'_Y$: the group they generate is isomorphic to $\Z/2\Z\star \Z/2\Z$.
 \item the composition $s_Y\circ s'_Y$ is a parabolic automorphism (hence a Halphen twist).
 \end{itemize}
 Thus, to construct examples of Halphen twists in characteristic $\neq 2$, we only need to construct a smooth
 cubic curve and two points $q$, $q'$ of $C(\bfk)$. If $\fieldchar(\bfk)\neq 7$, one can take the curve 
defined by 
$x^3+y^3+z^3+xyz=0$, with $q=[1:-1:0]$ and $q'=[1:0:-1]$. If $\fieldchar(\bfk)= 7$, one can take the curve
$y^2z=x^3+z^3$, with $q=[-1:0:1]$ and $q'=[0:1:0]$.

\subsubsection{Characteristic $2$ } Let us describe another strategy that works in characteristic $2$. Consider a field $\bfk$ of characteristic $2$. 
 In $\P^1\times \P^2$, with affine coordinates $(t)\times (x,y)$, consider the surface $X\subset \P^1\times \P^2$ given by the equation 
 \begin{equation}
 y^2+xy+t^3y=x^3+1.
 \end{equation}
Let $\pi\colon X\to \P^1$ be the first projection. For $t\neq \infty$, the fiber $F_t=\pi^{-1}(t)$ is a cubic curve
in Weierstrass form. There are two sections of $\pi$, given by $t\mapsto (-1,0)=[-1:0:1]$ and $t\mapsto [0:1:0]$.
Let $f\colon X\to X$ be the birational transformation of $X$ that preserves each of the fibers of $\pi$ and translates
the first section to the second. 
In~\cite{WLang}, William E. Lang shows that this Weierstrass pencil is a Halphen pencil with twelve singular
irreducible curves. Thus, there is no $-2$ curve on $X$, and this implies that $f$ is a Halphen twist (see~\cite{Cantat-Dolgachev}, Theorem~2.10).

\subsubsection{An example over $\C$} Our last example shows that $n_0$ must be larger than $1$ in Theorem~B  and~B'.

\begin{pro}
There is a Halphen twist $f$ in $\Cr_2(\C)$ such that 
\begin{enumerate}
\item $f$ preserves
every member of its invariant pencil and 
\item the smallest normal subgroup of $\Cr_2(\C)$ that contains $f$ is equal
  to $\Cr_2(\C)$.
\end{enumerate}

\end{pro}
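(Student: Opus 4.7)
The strategy is to find a Halphen twist $f$ preserving every member of its invariant pencil $\mathfrak{p}$ and then produce a non-trivial element of $\PGL_3(\C)$ inside the smallest normal subgroup of $\Cr_2(\C)$ containing $f$; by Lemma~\ref{lem:elliptic}, this will force that normal subgroup to equal $\Cr_2(\C)$.

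First, choose a Halphen pencil $\mathfrak{p}$ of index $m\geq 2$ whose unique multiple fiber is a smooth cubic $C_0\subset \P^2_\C$ with extra automorphisms (take $j(C_0)=0$). As in the example following Theorem~\ref{thm:giza}, the kernel $K$ of $\Aut(X)\to \GL(\NS(X))$ is then a non-trivial cyclic subgroup of $\PGL_3(\C)$ acting on $C_0$ by group-automorphisms, and its order can be made to satisfy $|K|\geq 3$. Let $g$ be a generator of $K$, and let $h\in P(\mathfrak{p})$ be an infinite-order translation preserving every fiber of the elliptic fibration $\pi\colon X\to \P^1_\C$. Set $f:=g\circ h$: since $g$ acts trivially on $\NS(X)$, the element $f$ acts on $\NS(X)$ through $h$ alone, so $f$ is parabolic with the same translation length as $h$, i.e. a Halphen twist. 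Both $g$ and $h$ preserve every member of $\mathfrak{p}$, so $f$ does too, which settles property~(1).

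For property~(2), exploit that $K\triangleleft \Aut(X)$: conjugation induces a homomorphism $\Aut(X)\to \Aut(K)\cong \Z/2\Z$. Arrange the nine base-points of $\mathfrak{p}$ in a sufficiently symmetric configuration (for instance as orbits of a dihedral subgroup of $\Aut(C_0)$) so that some $\phi\in \Aut(X)$ satisfies $\phi g\phi^{-1}=g^{-1}$ and $\phi h\phi^{-1}=h$; the second condition is ensured by choosing $h$ in the $\phi$-invariant sublattice of $P(\mathfrak{p})$, which remains of positive rank by an averaging argument over the finite-order action of~$\phi$. A direct calculation then gives
\begin{equation*}
\phi\,f\,\phi^{-1}\,f^{-1}
= (\phi g\phi^{-1})(\phi h\phi^{-1})\,h^{-1}\,g^{-1}
= g^{-1}\cdot h\cdot h^{-1}\cdot g^{-1}
= g^{-2},
\end{equation*}
which is a non-identity element of $K\subset\PGL_3(\C)$ (since $|K|\geq 3$). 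As this element lies in the smallest normal subgroup of $\Cr_2(\C)$ containing $f$, Lemma~\ref{lem:elliptic} concludes the proof.

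The hard part is the joint construction of these symmetries: one must produce a single Halphen pencil admitting simultaneously a linear cyclic kernel $K$ of order at least~$3$, a birational symmetry $\phi\in \Aut(X)$ inducing inversion on $K$, and a $\phi$-invariant infinite-order element of the translation lattice $P(\mathfrak{p})$. Each of these ingredients is classical in the theory of rational elliptic surfaces, but ensuring that all three coexist — without making $K$ trivial or shrinking the $\phi$-invariant part of $P(\mathfrak{p})$ to torsion — is the main geometric input the proof will require, and the special cubic $C_0$ with complex multiplication, together with a dihedral configuration of the nine base-points, is the most natural source of such an example.
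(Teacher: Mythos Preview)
Your plan has a genuine gap at its very first step. You assume you can choose a Halphen surface $X$ that (i) carries a Halphen twist and (ii) has non-trivial kernel $K=\ker\bigl(\Aut(X)\to\GL(\NS(X))\bigr)$ of order $\geq 3$. But the example you cite after Theorem~\ref{thm:giza} establishes only an \emph{upper} bound $|K|\leq 6$ when the multiple fibre is smooth; it says nothing about $K$ being non-trivial. In fact the paper explicitly remarks that it may well be that $K$ is \emph{automatically trivial} whenever $X$ supports a Halphen twist, and that no counter-example is known. So the object on which your whole argument rests may simply not exist, and in any case producing one would be a new result, not a routine construction.

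There is a second gap even if you grant the first step. Your ``averaging argument'' for a $\phi$-fixed infinite-order $h\in P(\mathfrak{p})$ does not work: if $\phi$ happens to act as $-\id$ on the translation lattice (which is exactly what an involution inverting the fibres typically does), then $h+\phi(h)=0$ for every $h$ and the $\phi$-invariant sublattice is trivial. You would need a specific reason why your particular $\phi$ has a non-trivial $+1$-eigenspace on $P(\mathfrak{p})$, and none is given.

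The paper sidesteps both difficulties by an entirely different construction: it starts from the abelian surface $A=E\times E$ with $E=\C/\Z[\jj]$, on which $\GL_2(\Z[\jj])$ acts linearly, and passes to the rational quotient $X_0=A/\langle \jj\cdot\id\rangle$. The unipotent matrices $F=\bigl(\begin{smallmatrix}1&1\\0&1\end{smallmatrix}\bigr)$ and $G=\bigl(\begin{smallmatrix}1&0\\1&1\end{smallmatrix}\bigr)$ descend to Halphen twists $f,g$ on $X_0$, each preserving every fibre of its pencil; they are conjugate in $\Cr_2(\C)$ (via the swap $(x,y)\mapsto(y,x)$), so the normal closure of $f$ contains $g$; and the order-$3$ element $U=(GF)^{-1}$ descends to an element conjugate into $\PGL_3(\C)$, whence Lemma~\ref{lem:elliptic} finishes. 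Everything here is explicit matrix algebra on an abelian surface---no delicate existence statements about $K$ are needed.
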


Note that (1) implies that $f$ acts as a translation on the general member of its invariant pencil. 

\begin{proof} 
Consider the  
ring of Eisenstein integers $\Lambda:=\Z[\jj]\subset \C$, where $\jj$ a primitive cubic root of unity. Denote by $E$ 
the elliptic curve $\C/\Lambda$ and by $A$ the abelian surface $E\times E$. 
The two matrices 
\begin{equation}
F=\left( 
\begin{array}{lr} 1 & 1 \\0 & 1 \end{array}
\right) \; {\text{ and }} \; G=\left( 
\begin{array}{lr} 1 & 0 \\1 & 1 \end{array}
\right)
\end{equation}
generate $\SL_2(\Z) \subset \GL_2(\Lambda)$, and in particular the order $3$ element 
\begin{equation}
U=\left( \begin{array}{lr} 0 & -1 \\1 & -1 \end{array}\right).
\end{equation}
Denote by $\Pi_F$ and $\Pi_G$ the fibration $A\to E$ given by the projection onto the second and the first factors. Since
$\GL_2(\Lambda)$ preserves the lattice $\Lambda\times \Lambda$ of $\C^2$, the group $\GL_2(\Lambda)$ embeds into
the group of automorphisms of $A$. The automorphism $F$ (resp. $G$) is a Halphen twist with respect to the fibration $\Pi_F$ 
(resp. $\Pi_G$)(\footnote{See~\cite{Cantat:Survey} for Halphen twists on any surface. Here, this corresponds to the following equivalent facts: (a) the action of $F$ on the cohomology group $H^{1,1}(A;\R)$ satisfies $\parallel (F^n)^*\parallel\simeq \alpha n^2$ for some $\alpha>0$, or (b) the action of $F$ on the N\'eron-Severi group $\NS(A)$  satisfies $\parallel (F^n)^*\parallel\simeq \alpha n^2$, or (c) given any polarization $H$ on $A$, the degree $H\cdot (F^n)^*H$ grows like $\alpha n^2$. }). Taking the quotient of $A$ by  $J(x,y)=(\jj x, \jj y)$ (an element of the center of $\GL_2(\Lambda)$), 
we get a (singular) surface $X_0$ on which $\PGL_2(\Lambda)$ acts faithfully. The surface $X_0$ is a rational surface: there 
is a birational map $\varphi\colon X_0\to \P^2_\C$ (see~\cite[Lemma 4.1]{Grivaux_rational_surface_auto}). 

The automorphisms $F$, $G$, and $U$  determine automorphisms $f_0$, $g_0$, and $u_0$ of $X_0$; after conjugacy 
by  $\varphi$, we get three elements $f$, $g$, $u$ of $\Cr_2(\C)$ such that: 
\begin{enumerate}
\item $f$ and $g$ are Halphen twists (with respect to distinct pencils);
\item $u$ has order $3$, and is contained in the group generated by $f$ and $g$. 
\end{enumerate}
Moreover, the fixed point set of $U$ is a finite subset of $A$, hence the fixed point set of $u_0$ 
is finite. The singularities of $X_0$ are quotient singularities, and their resolution only creates new rational curves. 
So, the fixed point set of $u$ does not contain any irrational curve, and this 
implies that $u$ is conjugate to an element of $\PGL_3(\C)$ (see~\cite{Blanc_linearisation_cyclic}). By Lemma~\ref{lem:elliptic} the smallest normal subgroup generated by 
$f$ and $g$ coincides with $\Cr_2(\C)$. 

Also, the linear map $(x,y)\mapsto (y,x)$ conjugates $F$ to $G$. So, the smallest normal subgroup generated by $f$
contains $g$, and the proposition is proven. 
\end{proof}

\bibliographystyle{plain}
\bibliography{biblio}

\end{document}